\documentclass[12pt]{amsart}
\linespread{1.2}
\usepackage[a4paper,hmargin={2.5cm,2.5cm},vmargin={2.5cm,2.5cm}]{geometry}

\usepackage[utf8]{inputenc}
\usepackage{lipsum}
\usepackage[T1]{fontenc}
\usepackage{lmodern}
\usepackage{mathrsfs}
\usepackage[T1]{fontenc}
\usepackage[english]{babel}
\usepackage{csquotes}
\usepackage{hyperref}
\usepackage{cite}
\usepackage[shortlabels]{enumitem}
\usepackage{libertinus}
\usepackage{libertinust1math}
\usepackage{mathtools}
\mathtoolsset{showonlyrefs}

\title[Multiplicity of weak solutions]{On the multiplicity of weak solutions for a class of coupled quasilinear elliptic systems}
\date{\today}

\author{Annamaria Canino}
\address{Dipartimento di Matematica e Informatica, Università della Calabria,
Arcavacata di Rende, Cosenza, Italy}
\email{annamaria.canino@unical.it}

\author{Simone Mauro}
\address{Dipartimento di Matematica e Informatica, Università della Calabria,
Arcavacata di Rende, Cosenza, Italy}
\email{simone.mauro@unical.it}

\numberwithin{equation}{section}

\usepackage[leqno]{amsmath}

\usepackage{tikz-cd}

%%%%%%%%%%
% Macros %
%%%%%%%%%%

\theoremstyle{plain}
\newtheorem{theorem}{Theorem}[section]

\newtheorem{proposition}[theorem]{Proposition}
\newtheorem{corollary}[theorem]{Corollary}

\theoremstyle{plain}
\newtheorem{definition}[theorem]{Definition}

\theoremstyle{plain}
\newtheorem{remark}[theorem]{Remark}

%%%%%%%%%%%
% Numbers %
%%%%%%%%%%%

\RequirePackage{dsfont}

\newcommand{\R}{\field{R}}

\newcommand{\field}[1]{\mathbb{#1}}
%convergenza debole
%convergenza debole star

\tikzset{%
    symbol/.style={%
        draw=none,
        every to/.append style={%
            edge node={node [sloped, allow upside down, auto=false]{$#1$}}}
    }
}
\usepackage{enumitem}

\tikzset{shorten <>/.style={shorten >=#1,shorten <=#1}}

\begin{document}

\begin{abstract}
We study the existence and regularity of weak solutions to the following quasilinear elliptic system:
\[
\begin{cases}
    -\mathrm{div}(A_k(x, u_k) |\nabla u_k|^{p_k - 2} \nabla u_k) + \dfrac{1}{p_k} D_s A_k(x, u_k) |\nabla u_k|^{p_k} = g_k(x, u) \quad \text{in } \Omega, \\
    u_k = 0 \quad \text{on } \partial\Omega, \qquad k=1,\dots,d,
\end{cases}
\]
where $\Omega \subset \mathbb{R}^N$ is a bounded domain with $N \geq 2$, $\boldsymbol{p} = (p_1, \dots, p_d)$, $p_k > 1$.
Using tools from nonsmooth critical point theory, we prove the existence of infinitely many weak solutions in $W_0^{1,\boldsymbol{p}}(\Omega) \cap L^\infty(\Omega; \mathbb{R}^d)$, where $W_0^{1,\boldsymbol p}(\Omega)=W_0^{1,p_1}(\Omega)\times\dots\times W_0^{1,p_d}(\Omega)$.

\noindent \textbf{Keywords:}  Subcritical nonlinearities, gradient elliptic systems, Dirichlet boundary conditions, quasilinear elliptic equations, nonsmooth critical point theory.\\
\noindent \textbf{2020 MSC:}  35A01, 35A15, 35J05, 35J20, 35J25.

\end{abstract}

\maketitle

\section{Introduction}
Let $\Omega \subset \mathbb{R}^N$ be a bounded domain with $N \geq 2$, $p_1, \dots, p_d > 1$, $\boldsymbol{p}=(p_1,\dots,p_d)$, and  
\[
W_0^{1,\boldsymbol{p}}(\Omega) := W_0^{1,p_1}(\Omega) \times \dots \times W_0^{1,p_d}(\Omega).
\]  
We consider the functional $f: W_0^{1,\boldsymbol{p}}(\Omega) \to \mathbb{R}$, defined as follows:
\[
f(u) = \sum_{k=1}^d \frac{1}{p_k} \int_\Omega A_k(x, u_k) |\nabla u_k|^{p_k} - \int_\Omega G(x, u).
\]
For every $k=1,\dots,d$, we assume that
\begin{itemize}
\item
 $A_k: \Omega \times \mathbb{R} \to \mathbb{R}$ is a $C^1$-Carathéodory function, i.e., $A_k(\cdot, s)$ is measurable for every $s \in \mathbb{R}$, and $A_k(x, \cdot)$ is $C^1$ for a.e. $x \in \Omega$. 
 \item There exist constants $C_0, \nu > 0$ such that  
\begin{align}\tag{$a.1$}\label{a.1}
    |A_k(x, s)|, |D_s A_k(x, s)| \leq C_0,\qquad&\text{for a.e. $x\in\Omega$, for every $s\in\R$,}\\
 \tag{$a.2$}\label{a.2}
    A_k(x, s) \geq \nu > 0,\qquad&\text{for a.e. $x\in\Omega$, for every $s\in\R$,}
\end{align}  
\item $G: \Omega \times \mathbb{R}^d \to \mathbb{R}$ is a $C^1$-Carathéodory function such that $G_{s_k}(x, 0):=D_{s_k}G(x,0) = 0$ and $G(x,0)=0$,  
\item there exists $C>0$ such that
\begin{equation}\tag{$g.1$}\label{g.1}
     \left|G_{s_k}(x,s)\right|\le C\left(1+|s_k|^{q_k-1}+\sum_{j\ne k}|s_j|^{q_j\frac{q_k-1}{q_k}}\right),\quad \text{and $q_k\in(p_k,p_k^*)$},   
    \end{equation}
    
%\begin{equation}\tag{$g.1$}\label{g.1}
% \textcolor{red}{|G(x,\boldsymbol s)|\le C+C\sum_{j=1}^d|s_j|^{q_j}} \qquad  |G_{s_k}(x, s)| \leq C + C \sum_{j=1}^d |s_j|^{q_j - 1},
%\end{equation}  
%\textcolor{red}{forse si può mettere una nonlinearità un po' più generale: $|\nabla G(x,s)|\le a(x)+b\sum_k|s_k|^{q_k}$ con $a\in L^{r}$ per $r\ge\max\{(p_1^*)',\dots,(p_k^*)'\}$ e poi $r\ge \max\{\frac{N}{p_1},\dots,\frac{N}{p_d}\}$ per la regolarità? }\\
for a.e. $x \in \Omega$ and for every $s \in \mathbb{R}^d$, where 
\[
p_k^* = 
\begin{cases}
\frac{N p_k}{N - p_k}, & \text{if } N > p_k, \\  
\infty, & \text{otherwise}.
\end{cases}
\]  
\item There exist constants $R > 0$, $\mu >\overline p:= \max\{p_1,\dots,p_d\}$, and $\gamma \in (0, \mu - \overline p)$ such that, for a.e. $x\in\Omega$, for every $s\in\R^d$ and for every $k=1,\dots,d$,
\begin{align}
\tag{$g.2$}\label{g.2}
    |s| \geq R \implies 0 < \mu G(x, s) \leq \sum_{k=1}^d s_k G_{s_k}(x, s),\\
\tag{$a.3$}\label{a.3}
    |s_k|\ge R \implies s_kD_s A_k(x, s_k)\ge0,\\
\tag{$a.4$}\label{a.4}
    s_kD_s A_k(x, s_k) \leq \gamma A_k(x, s_k)\quad\text{for every }s_k\in\R.
\end{align}

\item We denote $g_k(x, s) := G_{s_k}(x, s)$.
\end{itemize}
Under these assumptions, the energy functional is continuous in $W_0^{1,\boldsymbol{p}}(\Omega)$ (as shown in Theorem \ref{differenziabilità pfunzionale} below), but it is Gâteaux differentiable only along the directions $v \in W_0^{1,\boldsymbol{p}}(\Omega) \cap L^\infty(\Omega; \mathbb{R}^d)$ and
\begin{align*}
    \langle f'(u), v \rangle = \sum_{k=1}^d \int_\Omega A_k(x, u_k) |\nabla u_k|^{p_k - 2} \nabla u_k \cdot \nabla v_k &+ \sum_{k=1}^d \frac{1}{p_k} \int_\Omega D_s A_k(x, u_k) |\nabla u_k|^{p_k}v_k \\
    &- \sum_{k=1}^d \int_\Omega g_k(x, u) v_k.
\end{align*}
Hence, a function $u \in W_0^{1,\boldsymbol{p}}(\Omega)$ such that $\langle f'(u), v \rangle = 0$ for every $v \in C_c^\infty(\Omega; \mathbb{R}^d)$ is a weak solution in $\mathcal{D}'(\Omega)$ for  
\[
\tag{$\mathcal{P}$}\label{P}
\begin{cases}
    -\mathrm{div}(A_k(x, u_k) |\nabla u_k|^{p_k - 2} \nabla u_k) + \dfrac{1}{p_k} D_s A_k(x, u_k) |\nabla u_k|^{p_k} = g_k(x, u) \quad \text{in } \Omega, \\
    u_k = 0 \quad \text{on } \partial\Omega, \qquad k=1,\dots,d.
\end{cases}
\]
Namely:  
\begin{equation}\label{weak formulation}
\int_\Omega A_k(x, u_k) |\nabla u_k|^{p_k - 2} \nabla u_k \cdot \nabla v_k + \frac{1}{p_k} \int_\Omega D_s A_k(x, u_k) |\nabla u_k|^{p_k} v_k = \int_\Omega g_k(x, u) v_k,
\end{equation} 
for every $v \in C_c^\infty(\Omega; \mathbb{R}^d)$ and $k = 1, \dots, d$.

The purpose of this paper is to prove multiplicity and boundedness results for critical points of the functional $f$ by using nonsmooth critical point theory \cite{nonsmooththeory1, degiovanni1994critical}.  Since $f$ is continuous on $W_0^{1,\boldsymbol p}(\Omega)$ but is not, in general, differentiable on the whole space, the classical variational approach cannot be applied directly.  We therefore work with the weak slope and with the corresponding nonsmooth notions of critical point, Palais--Smale sequence and compactness condition, following the line of research initiated in \cite{caninoquasilineare1, caninoserdica, nonsmooththeory1} and further developed in \cite{canino2025neumann, pellacci1997critical}.

Let us first recall some related results for scalar quasilinear equations.  The existence of infinitely many solutions to the Dirichlet problem
\[
-\operatorname{div}(A(x,u)\nabla u)+\frac12\, D_s A(x,u)\nabla u\cdot\nabla u = g(x,u)\quad \text{in }\Omega,
\qquad
u=0\quad \text{on }\partial\Omega,
\]
where $A(x,s)$ is a matrix with $C^1$-Carathéodory coefficients, was obtained in \cite{caninoquasilineare1, caninoserdica, nonsmooththeory1}.  

A recent extension of this approach to the corresponding Neumann problem has been presented in  \cite{canino2025neumann}, while a broader class of quasilinear equations of the form
 \[
-\operatorname{div}(a(x,u,\nabla u)) + b(x,u,\nabla u) = g(x,u)\quad \text{in }\Omega,
\qquad
u\in W_0^{1,p}(\Omega;\R),
\] has been investigated in \cite{pellacci1997critical}. 

%The corresponding Neumann problem was recently treated in \cite{canino2025neumann}.  A more general class of equations of the form
%\[
%-\operatorname{div}(a(x,u,\nabla u)) + b(x,u,\nabla u) = g(x,u)\quad \text{in }\Omega,
%\qquad
%u\in W_0^{1,p}(\Omega;\R),
%\]
%was studied in \cite{pellacci1997critical}. 
 We also mention \cite{arcoyaboccardo1, candela2009some, candela2009infinitely}, where similar quasilinear problems are considered on $W_0^{1,p}(\Omega)\cap L^\infty(\Omega)$.  On this smaller space the energy functional becomes differentiable.
 However,  this restriction makes the treatment of the Palais-Smale condition more involved.

Moving from single equations to systems introduces severe analytical difficulties due to the coupling terms and the lack of smoothness. Classical variational gradient systems involving the standard Laplacian or the standard $p$-Laplacian operator, such as
\[
-\Delta u_1 = g_1(x,u_1,u_2), \qquad -\Delta u_2 = g_2(x,u_1,u_2),
\]
or
\[
    -\Delta_{p_1}u_1=g_1(x,u), \qquad
    -\Delta_{p_2}u_2=g_2(x,u), \qquad
    u=(u_1,u_2)\in W_0^{1,p_1}(\Omega)\times W_0^{1,p_2}(\Omega),
\]
have been widely investigated using smooth critical point theory, including minimization techniques, Mountain Pass arguments and linking constructions; see, for instance, \cite{deFigueiredo2000, boccardo2014some}. In this framework, the subcritical growth condition \eqref{g.1} is natural.

In the nonsmooth quasilinear setting, early multiplicity results for quadratic growth systems ($p_k=2$) were established in \cite{arioli2000existence, squassina2009existence} for problems of the form
\[
-\operatorname{div}(A_k(x,u)\nabla u_k)
   + \frac12 \sum_{i=1}^d D_{s_k}A_i(x,u)\,\nabla u_i\cdot\nabla u_i
   = g_k(x,u),
   \qquad u=(u_1,\dots,u_d)\in W_0^{1,2}(\Omega;\mathbb{R}^d).
\]
A further contribution in the $p$-growth case is given in \cite{mauro2026multiplicity}, where multiplicity and regularity are obtained for systems of the type
\[
-\operatorname{div}(A(x,u)|D u|^{p-2}\nabla u_k)
+\frac1p\nabla_{s_k}A(x,u)|D u|^p=g_k(x,u),
\qquad u\in W_0^{1,p}(\Omega;\R^d).
\]
Related gradient-type quasilinear systems have also been investigated in \cite{candela2021existence, candela2022multiple, candela2022nontrivial} by means of a different variational setting, under stronger assumptions on the growth of the nonlinearities.

The contribution of the present paper is to treat a system with possibly different exponents $p_1,\dots,p_d$ directly in the product space $W_0^{1,\boldsymbol p}(\Omega)$. Unlike the recent literature on quasilinear systems  \cite{candela2021existence, candela2022multiple, candela2022nontrivial}, we assume instead the weaker subcritical growth condition \eqref{g.1}, which corresponds to the standard one typically adopted in the regular case \cite{deFigueiredo2000} or restricted to single equations.  Thus the variational construction is not restricted to bounded functions.  The boundedness of the solutions is recovered afterwards, through a regularity argument based on \cite{Vannella23}.  Combining this regularity result with the nonsmooth Equivariant Mountain Pass Theorem, we obtain an unbounded sequence of critical levels and hence infinitely many bounded weak solutions of \eqref{P}.

Our main results are the following:  

\begin{theorem}\label{main result1}\hfill\\
Assume that hypotheses \eqref{a.1}-\eqref{a.4}, \eqref{g.1}-\eqref{g.2} are satisfied and that
\begin{equation}\tag{$a.5$}\label{a.5}
A_k(x,-s_k)=A_k(x,s_k),\qquad G(x,-s)=G(x,s),
\end{equation}
for a.e. $x\in\Omega$, for every $s\in\R^d$, and for every $k=1,\dots,d$.
Then there exists a sequence of weak solutions $\{u_h\}\subset W_0^{1,\boldsymbol{p}}(\Omega)$ of \eqref{P} such that $f(u_h)\to+\infty$ as $h\to+\infty$. Furthermore, any weak solution of \eqref{P} belongs to $W_0^{1,\boldsymbol{p}}(\Omega)\cap L^\infty(\Omega;\R^d)$.
\end{theorem}

In the special case where all exponents coincide, i.e., $p_{1}=\dots=p_{d}$, we can significantly weaken the structural assumptions (a.3) and (a.4). In recent literature (such as  \cite{candela2021existence}), a stronger super-$p$-linearity condition is typically assumed for the coefficients, imposed pointwise and for all values of each component. Here, we only require our condition (a.4p) at infinity. 

 This condition is formulated in a way that simultaneously involves all components of the system. 
For this reason, the scalar argument traditionally used in single-equation settings (e.g., \cite{caninoserdica}) cannot be applied directly because, when $d>1$, the coupling terms prevent the reduction of the problem to individual components. Despite this difficulty, we establish the following result:

\begin{theorem}\label{main result equal p}\hfill\\
Assume that $p_1=\dots=p_d=:p$, hypotheses \eqref{a.1}-\eqref{a.2}, \eqref{a.5}, 
\eqref{g.1}-\eqref{g.2} are satisfied, and that
\begin{equation}\tag{$a.4_p$}\label{a.4p}
    |s|\ge R \implies 0\le \sum_{k=1}^d s_kD_s A_k(x,s_k)|\xi_k|^p
    \le \gamma\sum_{k=1}^d A_k(x,s_k)|\xi_k|^p,
\end{equation}
for a.e. $x\in\Omega$,  and every $\xi=(\xi_1,\dots,\xi_d)\in(\R^N)^d$, and for some $\gamma\in(0,\mu-p)$. 
%Notice that \eqref{a.4p} implies \eqref{a.3}, by taking $\xi_j=0$ for every $j\ne k$.
%Assume also that \eqref{a.5} holds.
Then there exists a sequence of weak solutions
$\{u_h\}\subset W_0^{1,p}(\Omega;\R^d)$ of \eqref{P} such that
$f(u_h)\to+\infty$ as $h\to+\infty$. Furthermore, any weak solution of
\eqref{P} belongs to $W_0^{1,p}(\Omega;\R^d)\cap L^\infty(\Omega;\R^d)$.
\end{theorem}

\section{Preliminaries}
\subsection{Weak slope}

We recall some results on the critical point theory of continuous functionals, developed in \cite{nonsmooththeory1}. In this setting, we consider a metric space $(X,\operatorname{dist})$ and $f: X \to \R$ a continuous functional.

\begin{definition} \label{definition 1}
Let $(X,\operatorname{dist})$ be a metric space and let $f: X \to \mathds{R}$ be a continuous function. We consider $\sigma \ge 0$ such that there exist $\delta > 0$ and a continuous map $\mathscr{H}: B_{\delta}(u) \times [0, \delta] \to X$ such that
\begin{align}
\label{condition 1}
\operatorname{dist}(\mathscr{H}(v, t), v) \le t, \\
\label{condition 2}
f(\mathscr{H}(v, t)) \le f(v) - \sigma t.
\end{align}
We define
\[
|df|(u):=\sup\left\{\sigma\ge0\ :
\begin{aligned}
 &\text{there exist $\delta>0$ and}\\
  &\text{$\mathscr H\in C(B_\delta(u)\times[0,\delta];X)$}\\
&\text{satisfying \eqref{condition 1} and \eqref{condition 2}}
\end{aligned}
\right\}
\]
%$$|df|(u) := \sup \{\sigma \ge 0 : \exists\  \delta > 0, \mathscr{H} \in C(B_{\delta}(u) \times [0, \delta]; X) \text{ satisfying \eqref{condition 1} and \eqref{condition 2}} \}$$
as the weak slope of $f$ at $u$.
\end{definition}

\begin{theorem}[{\cite[Theorem 1.1.2]{nonsmooththeory1}}]\label{theorem 1}
Let $E$ be a normed space and $X \subset E$ an open subset. Fix $u \in X$ and $v \in E$ with $\|v\| = 1$. For each $w \in X$ we define
$$\overline{D}_+f(w)[v] := \limsup_{t \to 0^+} \frac{f(w + tv) - f(w)}{t}.$$
Then $|df|(u) \ge -\limsup_{w \to u} \overline{D}_+f(w)[v]$.
\end{theorem}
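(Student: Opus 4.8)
The statement is trivial when $\ell := -\limsup_{w\to u}\overline{D}_+f(w)[v]\le 0$, since $|df|(u)\ge 0$ always (take $\mathscr{H}(w,t)\equiv w$ and $\sigma=0$ in Definition \ref{definition 1}). So I would assume $\ell>0$, fix a real $\sigma\in(0,\ell)$, and aim to prove $|df|(u)\ge\sigma$; letting $\sigma\uparrow\ell$ then gives the conclusion, the case $\ell=+\infty$ being subsumed by letting $\sigma\to+\infty$. The deformation I would use is the naive one, $\mathscr{H}(w,t):=w+tv$: since $\|v\|=1$ it satisfies \eqref{condition 1} with equality, so the whole task reduces to producing a single $\delta>0$ for which \eqref{condition 2}, i.e. $f(w+tv)\le f(w)-\sigma t$, holds for every $(w,t)\in B_\delta(u)\times[0,\delta]$, with $w+tv$ staying in $X$.

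From $\sigma<\ell$ I would first choose $\rho>0$ with $B_\rho(u)\subset X$ (possible since $X$ is open) and $\overline{D}_+f(w)[v]<-\sigma$ for every $w\in B_\rho(u)$. Unravelling $\overline{D}_+f(w)[v]=\limsup_{t\to0^+}t^{-1}(f(w+tv)-f(w))<-\sigma$ shows that each such $w$ admits a threshold $t_w>0$ with $f(w+tv)\le f(w)-\sigma t$ on $[0,t_w]$. The obstacle is that $t_w$ is merely pointwise and may degenerate as $w$ varies; the key claim is that there is nonetheless $\delta>0$ with $2\delta<\rho$ and
\[
f(w+tv)\le f(w)-\sigma t\qquad\text{for all }w\in B_\delta(u),\ t\in[0,\delta].
\]
Granting this, $\mathscr{H}(w,t)=w+tv$ maps $B_\delta(u)\times[0,\delta]$ into $B_{2\delta}(u)\subset B_\rho(u)\subset X$, is continuous, and satisfies \eqref{condition 1}--\eqref{condition 2}, so $|df|(u)\ge\sigma$.

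To prove the claim I would argue by contradiction through a first-exit-time argument. If it fails, taking $\delta=1/n$ yields $w_n\to u$ and $t_n\in(0,1/n]$ with $f(w_n+t_nv)>f(w_n)-\sigma t_n$; for $n$ large, $w_n+\tau v\in B_\rho(u)$ for all $\tau\in[0,t_n]$. Set
\[
s_n:=\sup\bigl\{\,s\in[0,t_n]\ :\ f(w_n+\tau v)\le f(w_n)-\sigma\tau\ \text{ for all }\tau\in[0,s]\,\bigr\}.
\]
Then $s_n>0$ (because $\overline{D}_+f(w_n)[v]<-\sigma$), $s_n<t_n$ (because of the choice of $t_n$), and by continuity of $f$ the supremum is attained with $f(w_n+s_nv)=f(w_n)-\sigma s_n$ — a strict inequality here would let the interval be enlarged, contradicting maximality. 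Since $w_n+s_nv\to u$, it lies in $B_\rho(u)$ for $n$ large, so $\overline{D}_+f(w_n+s_nv)[v]<-\sigma$ produces $\varepsilon_n>0$ with $f\bigl(w_n+(s_n+s)v\bigr)\le f(w_n+s_nv)-\sigma s=f(w_n)-\sigma(s_n+s)$ for $s\in[0,\varepsilon_n]$. Concatenating with the bound valid on $[0,s_n]$ shows the defining property of $s_n$ persists on $[0,\min\{s_n+\varepsilon_n,t_n\}]$, contradicting either the maximality of $s_n$ or, when $s_n+\varepsilon_n\ge t_n$, the choice of $t_n$. This proves the claim, and with it the theorem.

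The single genuine obstacle is the one flagged above: upgrading the pointwise descent inequality — valid only for $t$ below a $w$-dependent threshold $t_w$ — to a uniform one on the full cylinder $B_\delta(u)\times[0,\delta]$. This is precisely where continuity of $f$ enters essentially, via the connectedness/first-exit argument; once the uniform estimate is in place, the construction of $\mathscr{H}$ and the verification of \eqref{condition 1}--\eqref{condition 2} are immediate.
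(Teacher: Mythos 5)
The paper does not prove Theorem~\ref{theorem 1}; it is imported verbatim from \cite[Theorem~1.1.2]{nonsmooththeory1}. Your argument is correct and is essentially the canonical one: reduce to producing a uniform descent estimate $f(w+tv)\le f(w)-\sigma t$ on a small cylinder $B_\delta(u)\times[0,\delta]$ via the naive deformation $\mathscr{H}(w,t)=w+tv$, and use continuity of $f$ through the first-exit-time argument (maximal descent interval is closed, hence, if proper, strictly extendable since $\overline{D}_+f(\cdot)[v]<-\sigma$ persists near $u$) to upgrade the $w$-dependent pointwise bound to a uniform one.
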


\begin{definition}\label{def p.critical}
Let $X$ be a metric space and let $f: X \to \mathds{R}$ be continuous. We say that $u \in X$ is a (lower) critical point if $|df|(u) = 0$. A (lower) critical point is said to be at level $c \in \mathbb{R}$ if it is also true that $f(u) = c$.
\end{definition}

\begin{definition}\label{ps}
Let $X$ be a metric space and let $f: X \to \mathds{R}$ be continuous. A sequence $\{u_n\} \subset X$ is a $(PS)_c$-sequence if
\begin{align}
\label{ps1}
&f(u_n) \to c, \\
\label{ps2}
&|df|(u_n) \to 0.
\end{align}
Furthermore, we say that $f$ satisfies the $(PS)_c$-condition if every $(PS)_c$-sequence admits a convergent subsequence in $X$. If the $(PS)_c$-condition holds for every $c\in\R$, we will simply write $(PS)$-condition.
\end{definition}
\begin{theorem}[Equivariant Mountain Pass, {\cite[Theorem 1.3.3]{nonsmooththeory1}}]\label{MPequi}	\hfill\\
Let $X$ be a Banach space and let $f:X\to\R$ be a continuous even functional. Suppose that
\begin{itemize}
\item $\exists\ \rho>0,\alpha>f(0)$ and a subspace $W\subset X$ of finite codimension such that $f\ge\alpha$ on $\partial B_{\rho}\cap W$,
\item for every finite-dimensional subspace $V$, there exists $R=R(V)>0$ such that $f\le f(0)$ in $B_R^c\cap V$.
\end{itemize}
If $f$ satisfies the $(PS)_c$-condition for every $c\ge\alpha$, then there exists a divergent sequence of critical values, namely, there exists a sequence of critical points $\{u_n\}\subset X$ such that $c_n:=f(u_n)\to+\infty$.
\end{theorem}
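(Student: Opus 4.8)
My plan is to run the classical symmetric-minimax scheme --- Krasnoselskii genus together with an equivariant deformation lemma --- inside the weak-slope framework of \cite{nonsmooththeory1,degiovanni1994critical}. Write $f^{a}=\{u\in X:f(u)\le a\}$ and let $\gamma(\cdot)$ denote the genus of closed symmetric subsets of $X\setminus\{0\}$; I will use freely its standard properties: monotonicity, subadditivity $\gamma(A\cup B)\le\gamma(A)+\gamma(B)$, the inequality $\gamma(A)\le\gamma(B)$ whenever an odd continuous map $A\to B$ exists, the continuity property $\gamma\bigl(\overline{N_{\delta}(K)}\bigr)=\gamma(K)$ for small $\delta$ when $K$ is compact with $0\notin K$ ($N_{\delta}$ being the open $\delta$-neighbourhood), and the Borsuk--Ulam facts that the boundary of a bounded symmetric open neighbourhood of $0$ in an $m$-dimensional space has genus $m$, while the zero set of an odd continuous map from such a boundary into $\R^{k}$ has genus $\ge m-k$. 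Since $f(0)<\alpha$, the point $0$ stays outside every set I deform. Fix linearly independent vectors $e_{1},e_{2},\dots\in X$, set $E_{m}=\operatorname{span}\{e_{1},\dots,e_{m}\}$, and use the second hypothesis (with $X_{1}^{(m)}=E_{m}$) to choose $R_{m}>0$ with $f\le f(0)$ on $E_{m}\setminus B_{R_{m}}$; put $D_{m}=\{u\in E_{m}:\|u\|\le R_{m}\}$.

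For $j\ge1$ define
\[
\Gamma_{j}=\bigl\{\,h(\overline{D_{m}\setminus Z})\ :\ m\ge j,\ h\in C(D_{m};X)\text{ odd},\ h=\operatorname{id}\text{ on }\partial B_{R_{m}}\cap E_{m},\ Z\subset X\text{ open symmetric},\ \gamma(\overline Z)\le m-j\,\bigr\}
\]
and $c_{j}=\inf_{B\in\Gamma_{j}}\sup_{B}f$. Then $\Gamma_{j}\ne\varnothing$ (take $h=\operatorname{id}$, $Z=\varnothing$, $m=j$); $\Gamma_{j+1}\subset\Gamma_{j}$, so $\{c_{j}\}$ is nondecreasing; $c_{j}<+\infty$ because $D_{j}\in\Gamma_{j}$ is compact; and $c_{j}\ge\alpha$ for every $j>k:=\operatorname{codim}X_{2}$. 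For the last point, given $B=h(\overline{D_{m}\setminus Z})\in\Gamma_{j}$ consider $O=\{u\in D_{m}:\|h(u)\|<\rho\}$: it is a bounded symmetric open neighbourhood of $0$ in $E_{m}$ (note $h(0)=0$) whose boundary lies in $\{\,\|h(u)\|=\rho\,\}$, since on $\partial B_{R_{m}}\cap E_{m}$ one has $h=\operatorname{id}$, hence $\|h\|=R_{m}>\rho$. Composing $h$ with a linear projection of $X$ onto a complement $X_{1}$ of $X_{2}$ gives an odd map $\partial O\to X_{1}\cong\R^{k}$ whose zero set has genus $\ge m-k$; after deleting $\overline Z$ it still has genus $\ge j-k\ge1$, so it contains a point $u_{0}\in\overline{D_{m}\setminus Z}$ with $h(u_{0})\in\partial B_{\rho}\cap X_{2}$, whence $\sup_{B}f\ge f(h(u_{0}))\ge\alpha$.

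The analytic ingredient is the equivariant deformation lemma for continuous functionals, obtained in \cite{nonsmooththeory1,degiovanni1994critical} by patching the local maps $\mathscr H$ of Definition \ref{definition 1} with a (here symmetric) locally finite partition of unity: if $\alpha\le a<b$, $f$ satisfies $(PS)_{c}$ for all $c\in[a,b]$, and $N$ is a symmetric open neighbourhood of the compact set $K_{[a,b]}=\{u:|df|(u)=0,\ a\le f(u)\le b\}$ with $0\notin\overline N$, then --- since $|df|\ge\sigma>0$ on $f^{-1}([a,b])\setminus N$ (otherwise $(PS)$ would produce a critical point there, using lower semicontinuity of $|df|$) and $f$ is even, so the construction may be carried out symmetrically because $0\notin f^{-1}([a,b])$ --- there is an odd $\eta\in C(X;X)$ with $f\circ\eta\le f$, $\eta=\operatorname{id}$ on $\{f\le f(0)\}$ (hence on each $\partial B_{R_{m}}\cap E_{m}$) and $\eta(f^{b}\setminus N)\subset f^{a}$. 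From this I deduce that every $c_{j}$ with $j>k$ is a critical value, and that $c_{j}=\dots=c_{j+p}=:c$ forces $\gamma(K_{c})\ge p+1$. Indeed, if $\gamma(K_{c})\le p$, pick $N\supset K_{c}$ symmetric with $\gamma(\overline N)\le p$ and, by $(PS)$, a narrow symmetric $[a,b]\ni c$ with $K_{[a,b]}\subset N$, together with the associated odd $\eta$; choose $B=h(\overline{D_{m}\setminus Z})\in\Gamma_{j+p}$ with $\sup_{B}f\le b$ and replace $Z$ by $\widetilde Z=Z\cup h^{-1}(N)$. Since $h$ restricts to an odd map $\overline{h^{-1}(N)}\to\overline N$, one has $\gamma(\overline{\widetilde Z})\le(m-j-p)+\gamma(\overline N)\le m-j$, while $h(\overline{D_{m}\setminus\widetilde Z})\subset B\setminus N\subset f^{b}\setminus N$; hence $(\eta\circ h)(\overline{D_{m}\setminus\widetilde Z})\subset f^{a}$, this set belongs to $\Gamma_{j}$ ($\eta\circ h$ is odd and equals $\operatorname{id}$ on the sphere), and $c_{j}\le a<c=c_{j}$, a contradiction; the case $p=0$ with $N=\varnothing$ gives criticality of $c_{j}$.

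Finally, suppose the critical values of $f$ in $[\alpha,+\infty)$ were bounded, say all $\le M$. Then $\{c_{j}\}_{j>k}$, nondecreasing and $\le M$, converges to some $\bar c$, which is itself a critical value since each $c_{j}$ lies in the compact set $f\bigl(K_{[\alpha,M]}\bigr)$ (compactness of $K_{[\alpha,M]}$ follows from $(PS)$) and $c_{j}\to\bar c$. Choose $\varepsilon>0$ so small that $\gamma\bigl(K_{[\bar c-\varepsilon,\bar c+\varepsilon]}\bigr)=\gamma(K_{\bar c})=:n_{0}$ --- possible because, by $(PS)$, $K_{[\bar c-\varepsilon,\bar c+\varepsilon]}$ shrinks to $K_{\bar c}$ in Hausdorff distance as $\varepsilon\downarrow0$ --- take a symmetric $N\supset K_{[\bar c-\varepsilon,\bar c+\varepsilon]}$ with $\gamma(\overline N)\le n_{0}$ and $0\notin\overline N$, and the associated odd $\eta$ with $\eta(f^{\bar c+\varepsilon}\setminus N)\subset f^{\bar c-\varepsilon}$. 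Fix $j>k$ with $c_{j}>\bar c-\varepsilon$; starting from $B\in\Gamma_{j+n_{0}}$ with $\sup_{B}f<\bar c+\varepsilon$ and enlarging its excluded set $Z$ to $Z\cup h^{-1}(N)$ (genus cost $\le n_{0}$, so we stay in $\Gamma_{j}$), the deformed set lies in $f^{\bar c-\varepsilon}$, forcing $c_{j}\le\bar c-\varepsilon$ --- a contradiction. Hence $c_{j}\to+\infty$, and picking, for each $j>k$, a critical point $u_{j}$ with $f(u_{j})=c_{j}$ yields the desired divergent sequence of critical values. The main obstacle is precisely the equivariant deformation lemma: manufacturing from the pointwise weak-slope data a single odd map that decreases $f$ at a uniform rate off a neighbourhood of the critical set --- uniformity coming from $(PS)$ plus lower semicontinuity of $|df|$, and oddness from keeping $0$, the unique fixed point of $u\mapsto-u$, away from the relevant strip thanks to $f(0)<\alpha$. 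The genus bookkeeping in the intersection lemma, though routine Borsuk--Ulam, is what makes $c_{j}\ge\alpha$ for $j>\operatorname{codim}X_{2}$.
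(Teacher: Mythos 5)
The paper does not prove Theorem \ref{MPequi}: it is stated as a quoted result from \cite[Theorem 1.3.3]{nonsmooththeory1} and used as a black box in the proof of Theorem \ref{main result1}. So there is no ``paper's own proof'' to compare against; what you have done is reconstruct the argument of the cited reference.

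Your reconstruction is correct and is the expected route: the Ambrosetti--Rabinowitz symmetric minimax scheme (genus, the classes $\Gamma_j$, the Borsuk--Ulam intersection estimate, the equivariant deformation lemma), transplanted to the weak-slope setting of \cite{nonsmooththeory1, degiovanni1994critical}. The three pillars are in place and correctly used. The lower bound $c_j\ge\alpha$ for $j>\operatorname{codim}X_2$ via projecting $h$ onto a finite-dimensional complement of $X_2$ and applying Borsuk--Ulam on $\partial O$, then subtracting $\overline Z$, is right (and the closed finite-codimensional subspace $X_2$ is complemented in a Banach space, so a bounded odd projection exists). The criticality of $c_j$ and the divergence argument via compactness of $K_{[\alpha,M]}$ and the continuity property of genus near $K_{\bar c}$ are the standard mechanism. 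The one step you cite rather than prove --- the odd deformation $\eta$ with $\eta(f^b\setminus N)\subset f^a$, $\eta=\operatorname{id}$ on low sublevels --- is the genuine technical content of \cite{nonsmooththeory1}; leaning on it is appropriate, and you correctly flag the two subtleties (uniform positivity of $|df|$ off $N$ via $(PS)$ plus lower semicontinuity of the weak slope, and oddness from $f$ even together with $0\notin f^{-1}([a,b])$ since $a\ge\alpha>f(0)$). One small bookkeeping remark: you declare $Z\subset X$ \emph{open} symmetric, but the enlargement $\widetilde Z=Z\cup h^{-1}(N)$ is only relatively open in $D_m$; this is harmless (Rabinowitz's own formulation works with closed subsets of $E_m$ of controlled genus and the rest of the argument is unchanged), but the admissible class should be defined consistently with the modification you later perform. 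None of this affects the validity of the argument.
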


% We recall the Linking Theorem proved by Bartolo, Benci, and Fortunato in \cite{BartoloBenciFortunato}, but for a continuous functional:

% % 
% \begin{theorem}\label{linking theorem}
%     Let $X$ be a Banach space and assume that $f:X\to\R$ is an even continuous function such that $f(0)=0$.\\ Suppose also there exists closed subspaces $Y,Z\subset X$ and constants $\alpha, c_0,\rho$ such that $Y$ has finite codimension, $Z$ has finite dimension, $0<\alpha<c_0, \rho>0$ and 
%     \begin{align*}
%         &\inf_{x\in Y\cap\partial B_\rho}f(x)>\alpha, &\max_{x\in Z} f(x)<c_0.
%     \end{align*}
%     If $f$ satisfies the Palais--Smale condition at level $c\in(\alpha,c_0)$ and if $\dim Z-\text{codim} Y>0$, then $f$ has either $k$ critical values in $(\alpha,c_0)$ or it has infinitely many critical points in $f^{-1}(\alpha,c_0)$. 
% \end{theorem}
% \begin{proof}
% The proof is the same obtained in the regular case in \cite[Theorem 2.9 and Theorem 2.4]{BartoloBenciFortunato}, and we can do so using the Equivariant Deformation Theorem, c.f. \cite[Theorem 1.2.5]{nonsmooth critical point theory1}.
% \end{proof}

\subsection{Properties of the energy functional}
In this subsection, and in the rest of the paper, $f$ is the energy functional
\[f(u)=\sum_{k=1}^d\frac{1}{p_k}\int_\Omega A_k(x,u_k)|\nabla u_k|^{p_k}-\int_\Omega G(x,u).\]
We denote the dual of the space $ W_0^{1,\boldsymbol{p}}$ by $ W^{-1,\boldsymbol{p}'}$, namely
\[ W^{-1,\boldsymbol{p}'}:=W^{-1,p_1'}\times\dots\times W^{-1,p_d'},\]
where $p_k'$ is the conjugate exponent of $p_k$ with $k=1,\dots,d$.
\begin{definition}
A sequence $\{u_n\}\subset  W_0^{1,\boldsymbol{p}}(\Omega)$ is called a Concrete Palais--Smale sequence at level $c$ for $f$, $(CPS)_c$-sequence, if
\begin{itemize}
\item $f(u_n)\to c$ in $\R$;
\item $ -\text{div}(A_k(x,u_{n,k})|\nabla u_{n,k}|^{p_k-2}\nabla u_{n,k})+\frac{1}{p_k}D_sA_k(x,u_{n,k})|\nabla u_{n,k}|^{p_k}-g_k(x,u_n) \in  W^{-1,p_k'}(\Omega)$, for $n$ large enough, for every $k=1,\dots,d$.
\item $-\text{div}(A_k(x,u_{n,k})|\nabla u_{n,k}|^{p_k-2}\nabla u_{n,k})+\frac{1}{p_k}D_sA_k(x,u_{n,k})|\nabla u_{n,k}|^{p_k}-g_k(x,u_n)\to 0$ strongly in $W^{-1, p_k'}(\Omega)$ for every $k=1,\dots,d$.
\end{itemize}
We say that $f$ satisfies the $(CPS)_c$-condition if every $(CPS)_c$-sequence admits a convergent subsequence in $ W_0^{1,\boldsymbol{p}}(\Omega)$.
\end{definition}
\begin{theorem}\label{differenziabilità pfunzionale}
Assume that hypotheses \eqref{a.1} and \eqref{g.1} hold.
The functional $f: W_0^{1,\boldsymbol{p}}(\Omega)\to\R$ is continuous and for every $u\in  W_0^{1,\boldsymbol{p}}(\Omega)$ we have
\begin{align*}
 |df|(u)\ge\sup_{\substack{\varphi\in C_c^{\infty}(\Omega; \R^d)\\ \|\varphi\|_{ W_0^{1,\boldsymbol{p}}(\Omega)}\le1}}\sum_{k=1}^d\biggl[\int_\Omega A_k(x,u_k)|\nabla u_k|^{p_k-2}\nabla u_k\cdot\nabla\varphi_k&+\frac{1}{p_k}\int_\Omega D_{s}A_k(x,u_k)|\nabla u_k|^{p_k}\varphi_k\\
 &-\int_\Omega g_k(x,u)\varphi_k\biggr].   
\end{align*}
\end{theorem}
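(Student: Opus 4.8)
The plan is to prove the two assertions of the statement separately: continuity of $f$ on $W_0^{1,\boldsymbol p}(\Omega)$, and the lower bound on $|df|(u)$, which is the substantial part. Continuity of $f$ is elementary: integrating \eqref{g.1} along the segments from $0$ to $s$ and using $G(x,0)=0$, together with Young's inequality on the mixed powers, yields a bound $|G(x,s)|\le C\bigl(1+\sum_k|s_k|^{q_k}\bigr)$; since $q_k<p_k^*$ the embedding $W_0^{1,p_k}(\Omega)\hookrightarrow L^{q_k}(\Omega)$ is continuous, so $u\mapsto\int_\Omega G(x,u)$ is finite and continuous on $W_0^{1,\boldsymbol p}(\Omega)$ by continuity of the Nemytskii operator of $G$ from $\prod_kL^{q_k}(\Omega)$ into $L^1(\Omega)$. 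For the leading terms, if $u_n\to u$ in $W_0^{1,\boldsymbol p}(\Omega)$ then $\nabla u_{n,k}\to\nabla u_k$ in $L^{p_k}$ and, along a subsequence, $u_{n,k}\to u_k$ a.e.; writing $A_k(x,u_{n,k})|\nabla u_{n,k}|^{p_k}-A_k(x,u_k)|\nabla u_k|^{p_k}=A_k(x,u_{n,k})\bigl(|\nabla u_{n,k}|^{p_k}-|\nabla u_k|^{p_k}\bigr)+\bigl(A_k(x,u_{n,k})-A_k(x,u_k)\bigr)|\nabla u_k|^{p_k}$, the first summand tends to $0$ in $L^1$ by \eqref{a.1} and the second by dominated convergence; the usual subsequence principle then gives $f(u_n)\to f(u)$.

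For the slope estimate the key point will be that, for each fixed $\varphi\in C_c^\infty(\Omega;\R^d)$, the map $w\mapsto\langle f'(w),\varphi\rangle$ is well defined and continuous on $W_0^{1,\boldsymbol p}(\Omega)$. Well-definedness, together with the identification of $\langle f'(w),\varphi\rangle$ with the Gâteaux derivative of $f$ at $w$ in the direction $\varphi$ (and, likewise, $-\varphi$), will follow by differentiation under the integral sign: for $|t|\le1$ the $t$-derivatives of $A_k(x,w_k+t\varphi_k)|\nabla w_k+t\nabla\varphi_k|^{p_k}$ and of $G(x,w+t\varphi)$ are dominated by fixed $L^1$ functions, using \eqref{a.1}, \eqref{g.1}, $\varphi_k,\nabla\varphi_k\in L^\infty$ and $|\nabla w_k|^{p_k},|\nabla w_k|^{p_k-1}\in L^1$. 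Continuity is then checked termwise: since $\xi\mapsto|\xi|^{p_k-2}\xi$ is continuous from $L^{p_k}(\Omega;\R^N)$ into $L^{p_k'}(\Omega;\R^N)$, one gets $A_k(x,w_{n,k})|\nabla w_{n,k}|^{p_k-2}\nabla w_{n,k}\to A_k(x,w_k)|\nabla w_k|^{p_k-2}\nabla w_k$ in $L^{p_k'}$ (splitting as above, the $A_k$-difference term being controlled by dominated convergence since $|\nabla w_k|^{p_k-1}\in L^{p_k'}$), which pairs with $\nabla\varphi_k\in L^{p_k}$; similarly $D_sA_k(x,w_{n,k})|\nabla w_{n,k}|^{p_k}\to D_sA_k(x,w_k)|\nabla w_k|^{p_k}$ in $L^1$, pairing with $\varphi_k\in L^\infty$; and $g_k(\cdot,w_n)\to g_k(\cdot,w)$ in $L^{q_k'}(\Omega)$ by \eqref{g.1} and continuity of the Nemytskii operator of $g_k$ from $\prod_jL^{q_j}(\Omega)$ into $L^{q_k'}(\Omega)$, pairing with $\varphi_k\in L^{q_k}$. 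Each convergence is obtained first along a subsequence where the pointwise convergences hold, then extended to the full sequence.

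To conclude, fix $u\in W_0^{1,\boldsymbol p}(\Omega)$ and $\varphi\in C_c^\infty(\Omega;\R^d)$ with $\|\varphi\|_{W_0^{1,\boldsymbol p}(\Omega)}\le1$. We may assume $\langle f'(u),\varphi\rangle>0$ (otherwise the inequality is trivial), and replacing $\varphi$ by $\varphi/\|\varphi\|_{W_0^{1,\boldsymbol p}(\Omega)}$, which does not decrease $\langle f'(u),\varphi\rangle$, we may also assume $\|\varphi\|_{W_0^{1,\boldsymbol p}(\Omega)}=1$. Applying Theorem \ref{theorem 1} with $v=-\varphi$ and using $\overline{D}_+f(w)[-\varphi]=-\langle f'(w),\varphi\rangle$, we obtain
\[
|df|(u)\ \ge\ -\limsup_{w\to u}\overline{D}_+f(w)[-\varphi]\ =\ \liminf_{w\to u}\langle f'(w),\varphi\rangle\ =\ \langle f'(u),\varphi\rangle,
\]
the last equality by the continuity established in the previous paragraph; taking the supremum over all such $\varphi$ gives the claim. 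I expect the main obstacle to be exactly that continuity of $w\mapsto\langle f'(w),\varphi\rangle$: the coefficient $A_k(x,\cdot)$ must be decoupled from the nonlinear gradient term to get strong $L^{p_k'}$-convergence, and the lower-order term $\tfrac1{p_k}\int_\Omega D_sA_k(x,w_k)|\nabla w_k|^{p_k}\varphi_k$ — the very term responsible for $f$ failing to be differentiable in arbitrary directions — forces $\varphi_k\in L^\infty$, which is why only test functions appear in the estimate.
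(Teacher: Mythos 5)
Your proof is correct and follows essentially the same route as the paper: dominated convergence (Lebesgue) to show $\langle f'(w),\varphi\rangle$ is well defined as a Gâteaux derivative along $\varphi\in C_c^\infty$, then Theorem~\ref{theorem 1} applied to $v=-\varphi$ together with the continuity of $w\mapsto\langle f'(w),\varphi\rangle$ to conclude $|df|(u)\ge\langle f'(u),\varphi\rangle$. The only difference is presentational: you spell out termwise the continuity of $w\mapsto\langle f'(w),\varphi\rangle$ (which is indeed the nontrivial point), whereas the paper compresses it into the single asserted equality $\langle f'(u),\varphi\rangle=\limsup_{w\to u,\;t\to0^+}\frac{f(w+t\varphi)-f(w)}{t}$.
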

\begin{proof}
The continuity of $f$ follows from the continuity of $A_k(x,\cdot)$ and $G(x,\cdot)$, from hypothesis \eqref{g.1}, and from Lebesgue's theorem. Let $\varphi\in C_c^\infty(\Omega;\R^d)$. Then, again by Lebesgue's theorem, the directional derivative
\begin{align*}
\langle f'(u),\varphi\rangle
&:=\lim_{t\to0}\frac{f(u+t\varphi)-f(u)}{t}\\
&=\sum_{k=1}^d\int_\Omega A_k(x,u_k)|\nabla u_k|^{p_k-2}\nabla u_k\cdot\nabla\varphi_k+\sum_{k=1}^d\frac{1}{p_k}\int_\Omega D_sA_k(x,u_k)|\nabla u_k|^{p_k}\varphi_k\\
&\quad-\sum_{k=1}^d\int_\Omega g_k(x,u)\varphi_k
\end{align*}
is well defined. By Theorem \ref{theorem 1}, applied to the direction $-\varphi$, we get, whenever $\|\varphi\|\le1$,
\[
|df|(u)\ge -\limsup_{\substack{w\to u\\ t\to0^+}}\frac{f(w-t\varphi)-f(w)}{t}=\langle f'(u),\varphi\rangle.
\]
Taking the supremum over all admissible $\varphi$ gives the desired estimate.
\end{proof}
\begin{corollary}\label{PS e CPS}
Assume that hypotheses \eqref{a.1} and \eqref{g.1} hold.
The following facts hold:
\begin{enumerate}
\item[$(i)$] if $u$ is a lower critical point for $f$, then $u$ is a weak solution of the problem \eqref{P},
\item[$(ii)$] every $(PS)_c$-sequence is also a $(CPS)_c$-sequence,
\item[$(iii)$] $f$ satisfies the $(CPS)_c$-condition $\implies f$ satisfies the $(PS)_c$-condition.
\end{enumerate}
\end{corollary}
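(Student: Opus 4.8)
The plan is to deduce all three items directly from Theorem \ref{differenziabilità pfunzionale}, using that the right-hand side of the inequality there is a \emph{linear} functional of the test function. For $u\in W_0^{1,\boldsymbol p}(\Omega)$ and $\varphi\in C_c^\infty(\Omega;\R^d)$ write
\begin{align*}
\langle f'(u),\varphi\rangle:=\sum_{k=1}^d\biggl[&\int_\Omega A_k(x,u_k)|\nabla u_k|^{p_k-2}\nabla u_k\cdot\nabla\varphi_k+\frac{1}{p_k}\int_\Omega D_sA_k(x,u_k)|\nabla u_k|^{p_k}\varphi_k\\
&-\int_\Omega g_k(x,u)\varphi_k\biggr],
\end{align*}
a quantity which is well defined and finite by \eqref{a.1}, \eqref{g.1} and the computation already made in the proof of Theorem \ref{differenziabilità pfunzionale}. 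Since $\varphi\mapsto\langle f'(u),\varphi\rangle$ is linear, any bound $\sup_{\|\varphi\|\le1}\langle f'(u),\varphi\rangle\le\sigma$ upgrades at once, by replacing $\varphi$ with $-\varphi$, to $|\langle f'(u),\varphi\rangle|\le\sigma\,\|\varphi\|_{W_0^{1,\boldsymbol p}(\Omega)}$ for every $\varphi\in C_c^\infty(\Omega;\R^d)$.

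For $(i)$: if $u$ is a lower critical point then $|df|(u)=0$, so Theorem \ref{differenziabilità pfunzionale} gives $\sup_{\|\varphi\|\le1}\langle f'(u),\varphi\rangle\le0$ and hence, by the remark above, $\langle f'(u),\varphi\rangle=0$ for every $\varphi\in C_c^\infty(\Omega;\R^d)$; choosing $\varphi$ with only the $k$-th component nonzero yields \eqref{weak formulation} for each $k$, i.e.\ $u$ is a weak solution of \eqref{P}. For $(ii)$: let $\{u_n\}$ be a $(PS)_c$-sequence. The condition $f(u_n)\to c$ is common to both definitions, so only the operator condition must be checked. Fix $k$; for $\varphi_k\in C_c^\infty(\Omega)$ with $\|\varphi_k\|_{W_0^{1,p_k}(\Omega)}\le1$ apply Theorem \ref{differenziabilità pfunzionale} at $u_n$ with the choice $\varphi=(0,\dots,\varphi_k,\dots,0)$ (whose $W_0^{1,\boldsymbol p}(\Omega)$-norm is $\le1$), obtaining $|\langle T_{n,k},\varphi_k\rangle|\le|df|(u_n)$, where $T_{n,k}\in\mathcal D'(\Omega)$ denotes the $k$-th operator appearing in the definition of $(CPS)_c$. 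Since $|df|(u_n)<\infty$ for $n$ large, taking the supremum over such $\varphi_k$ and invoking the density of $C_c^\infty(\Omega)$ in $W_0^{1,p_k}(\Omega)$ shows that $T_{n,k}$ extends to an element of $W^{-1,p_k'}(\Omega)$ with $\|T_{n,k}\|_{W^{-1,p_k'}(\Omega)}\le|df|(u_n)\to0$, which is exactly the $(CPS)_c$ requirement. Statement $(iii)$ is then immediate: by $(ii)$ every $(PS)_c$-sequence is a $(CPS)_c$-sequence, hence admits a subsequence converging in $W_0^{1,\boldsymbol p}(\Omega)$ once the $(CPS)_c$-condition is assumed.

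I do not expect a real obstacle here. The only point deserving care is that the lower-order term $\frac{1}{p_k}D_sA_k(x,u_k)|\nabla u_k|^{p_k}$ lies a priori only in $L^1(\Omega)$, so for a generic $u$ the operator $T_k$ need not belong to $W^{-1,p_k'}(\Omega)$; the mechanism above shows that finiteness of the weak slope forces the \emph{whole} combination $T_k$ into $W^{-1,p_k'}(\Omega)$, which is precisely why the definition of $(CPS)_c$ carries the clause ``eventually as $n\to+\infty$''. No ingredient beyond Theorem \ref{differenziabilità pfunzionale} and the density of smooth functions is needed.
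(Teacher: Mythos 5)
Your proof is correct and follows what the paper evidently intends: the corollary is stated without proof immediately after Theorem \ref{differenziabilità pfunzionale} precisely because all three items are direct consequences of it, via the linearity of $\varphi\mapsto\langle f'(u),\varphi\rangle$ (so that $\sup_{\|\varphi\|\le1}\langle f'(u),\varphi\rangle$ controls $|\langle f'(u),\varphi\rangle|$ for all test functions), the density of $C_c^\infty(\Omega)$ in $W_0^{1,p_k}(\Omega)$, and, for $(iii)$, the chaining of $(ii)$ with the definitions. Your remark on why the ``eventually'' clause in the $(CPS)_c$ definition is needed is a nice clarification of a point the paper leaves implicit.
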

We conclude this section with a useful Brezis-Browder type result.
% and a $L^\infty$-regularity result.
This result follows from \cite{brezisbrowdergenerale}. However, for completeness, we give an explicit proof in our setting.
%\textcolor{red}{Questo risultato può essere visto come una conseguenza di \cite{brezisbrowdergenerale}}
\begin{theorem}\label{density 2.3.2}
Assume that hypotheses \eqref{a.1} and \eqref{a.2} hold.
Let $\omega\in W^{-1,\boldsymbol{p}'}(\Omega)$ and $u\in  W_0^{1,\boldsymbol{p}}(\Omega)$ such that
\begin{equation}\label{eq thm density}
   -\text{div}(A_k(x,u_k)|\nabla u_k|^{p_k-2}\nabla u_k)+\frac{1}{p_k}D_sA_k(x,u_k)|\nabla u_k|^{p_k}=\omega_k\ \ \text{in $\mathcal D'(\Omega)$},\quad k=1,\dots,d.
\end{equation}
Let $v\in W_0^{1,\boldsymbol{p}}(\Omega)$ such that 
$\left(D_sA_k(x,u_k)|\nabla u_k|^{p_k}v_k\right)^-\in L^1(\Omega)$ for every $k=1,\dots,d$. Then, we have 
\[D_sA_k(x,u_k)|\nabla u_k|^{p_k}v_k\in L^1(\Omega)\ \  \text{and}\]
\[\int_\Omega A_k(x,u_k)|\nabla u_k|^{p_k-2}\nabla u_k\cdot\nabla v_k+\frac{1}{p_k}\int_\Omega D_{s}A_k(x,u_k)|\nabla u_k|^{p_k}v_k=\langle\omega_k,v_k\rangle,\]
for every $k=1,\dots,d$.
\end{theorem}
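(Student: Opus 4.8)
The plan is to prove this Brezis--Browder type result by a truncation and approximation argument, reducing the statement to a situation where the test function is bounded and the equation can be tested directly. The starting point is that \eqref{eq thm density} holds in $\mathcal D'(\Omega)$, so for every $\psi\in C_c^\infty(\Omega;\R^d)$ we have
\[
\int_\Omega A_k(x,u_k)|\nabla u_k|^{p_k-2}\nabla u_k\cdot\nabla\psi_k+\frac1{p_k}\int_\Omega D_sA_k(x,u_k)|\nabla u_k|^{p_k}\psi_k=\langle\omega_k,\psi_k\rangle,
\]
and since $A_k(x,u_k)|\nabla u_k|^{p_k-2}\nabla u_k\in L^{p_k'}(\Omega;\R^N)$ by \eqref{a.1}, this identity extends by density to all $\psi\in W_0^{1,\boldsymbol p}(\Omega)\cap L^\infty(\Omega;\R^d)$, provided one knows the zeroth-order term $D_sA_k(x,u_k)|\nabla u_k|^{p_k}\psi_k$ is integrable — which it is for bounded $\psi_k$, again by \eqref{a.1}.

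Next I would work component by component (the system is uncoupled in the principal part, so $k$ is fixed throughout). Fix $k$ and, for $n\in\N$, set $v_{n,k}:=T_n(v_k)$ where $T_n(s)=\max\{-n,\min\{n,s\}\}$ is the truncation at level $n$. Then $v_{n,k}\in W_0^{1,p_k}(\Omega)\cap L^\infty(\Omega)$, so it is an admissible test function and
\[
\int_\Omega A_k(x,u_k)|\nabla u_k|^{p_k-2}\nabla u_k\cdot\nabla v_{n,k}+\frac1{p_k}\int_\Omega D_sA_k(x,u_k)|\nabla u_k|^{p_k}v_{n,k}=\langle\omega_k,v_{n,k}\rangle.
\]
Now I pass to the limit $n\to+\infty$ in each term. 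For the first term, $\nabla v_{n,k}=\nabla v_k\,\mathbf 1_{\{|v_k|\le n\}}\to\nabla v_k$ a.e.\ and is dominated in $L^{p_k}$, so by dominated convergence it tends to $\int_\Omega A_k(x,u_k)|\nabla u_k|^{p_k-2}\nabla u_k\cdot\nabla v_k$. For the right-hand side, $v_{n,k}\to v_k$ strongly in $W_0^{1,p_k}(\Omega)$, hence $\langle\omega_k,v_{n,k}\rangle\to\langle\omega_k,v_k\rangle$. The delicate term is the middle one: I split the integrand according to sign. On the set where $D_sA_k(x,u_k)|\nabla u_k|^{p_k}v_k\ge0$, the integrands $D_sA_k(x,u_k)|\nabla u_k|^{p_k}v_{n,k}$ are nonnegative and increase monotonically to $D_sA_k(x,u_k)|\nabla u_k|^{p_k}v_k$ (since $0\le T_n(s)\le T_{n+1}(s)\le s$ for $s\ge0$ and symmetrically for $s\le0$, and the sign of $D_sA_k$ is unrestricted but the product $D_sA_k\cdot v_k$ has fixed sign on this set), so by the monotone convergence theorem that part of the integral converges to $\int_{\{\cdot\ge0\}}D_sA_k(x,u_k)|\nabla u_k|^{p_k}v_k$, possibly $+\infty$. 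On the complementary set $\{D_sA_k(x,u_k)|\nabla u_k|^{p_k}v_k<0\}$, the integrand equals $-\bigl(D_sA_k(x,u_k)|\nabla u_k|^{p_k}v_k\bigr)^-\,$ modified by the truncation, and this is controlled in modulus by the $L^1$ function $\bigl(D_sA_k(x,u_k)|\nabla u_k|^{p_k}v_k\bigr)^-$ given in the hypothesis, so dominated convergence applies there.

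Combining these limits, the left-hand side converges, and since the other two terms have finite limits, the monotone-convergence part cannot blow up: $\int_{\{\cdot\ge0\}}D_sA_k(x,u_k)|\nabla u_k|^{p_k}v_k<+\infty$, which together with the $L^1$ bound on the negative part yields $D_sA_k(x,u_k)|\nabla u_k|^{p_k}v_k\in L^1(\Omega)$. Passing to the limit in the identity then gives exactly
\[
\int_\Omega A_k(x,u_k)|\nabla u_k|^{p_k-2}\nabla u_k\cdot\nabla v_k+\frac1{p_k}\int_\Omega D_sA_k(x,u_k)|\nabla u_k|^{p_k}v_k=\langle\omega_k,v_k\rangle,
\]
as claimed. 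The main obstacle is the bookkeeping around the middle term: one must be careful that the sign of $D_sA_k$ is not controlled (only $|D_sA_k|\le C_0$ from \eqref{a.1}), so the monotonicity argument must be run on the level sets of the \emph{product} $D_sA_k(x,u_k)|\nabla u_k|^{p_k}v_k$ rather than on $v_k$ alone, and one has to justify that $\{D_sA_k\cdot|\nabla u_k|^{p_k}v_k\ge 0\}$ is indeed where $n\mapsto D_sA_k\cdot|\nabla u_k|^{p_k}T_n(v_k)$ is monotone increasing — which follows because on that set $D_sA_k$ and $v_k$ have the same sign (or $D_sA_k=0$), and $n\mapsto T_n(v_k)$ moves monotonically toward $v_k$. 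Everything else is a routine application of the density of $W_0^{1,\boldsymbol p}\cap L^\infty$ test functions and the dominated/monotone convergence theorems.
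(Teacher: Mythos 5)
Your proof is correct and follows essentially the same strategy as the paper: first extend the equation to test functions in $W_0^{1,\boldsymbol p}(\Omega)\cap L^\infty(\Omega;\R^d)$, then approximate $v$ by bounded functions of the form $\lambda_n v$ with $\lambda_n\in[0,1]$ and $\lambda_n v\to v$ in $W_0^{1,\boldsymbol p}$, and finally pass to the limit using the $L^1$ hypothesis on the negative part of the zeroth-order term. The only difference is cosmetic: the paper uses the Brezis--Browder approximating sequence $v_{n,k}=\lambda_{n,k}v_k$ built from a smooth sequence $\tilde v_n\to v$ (which yields compactly supported $v_{n,k}$), and applies Fatou's lemma to the shifted integrand $T_k v_{n,k}+[T_k v_k]^-\ge0$, whereas you use the plain truncation $T_n(v_k)$ and split the middle integral into a monotone-convergence part on $\{T_kv_k\ge0\}$ and a dominated-convergence part on $\{T_kv_k<0\}$. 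Both constructions share the crucial features ($|v_{n,k}|\le|v_k|$, $v_{n,k}$ is a nonnegative multiple of $v_k$, convergence in $W_0^{1,p_k}$), and your monotone/dominated split on the level sets of the product $T_kv_k$ is analytically equivalent to the paper's shifted Fatou argument. Your version is slightly simpler since it avoids the auxiliary smooth sequence; note that this simplification is admissible precisely because the extension to bounded $W_0^{1,\boldsymbol p}$ test functions (which you both invoke at the outset) removes the need for compact support in the approximating sequence.
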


\begin{proof}
The case $v\in W_0^{1,\boldsymbol{p}}(\Omega)\cap L^\infty(\Omega; \R^d)$ follows by a standard density argument.

%It follows by the Brezis-Browder type result proved in \cite{Brezisbrowder Boccardo} by Boccardo, Giachetti and Murat.
Let $\{\tilde v_n\}\subset C_c^\infty(\Omega; \R^d)$ such that $\tilde v_n\to v$ in $W_0^{1,\boldsymbol{p}}(\Omega)$ and we consider, as in \cite{brezisbrowder1}, $v_{n,k}(x)=v_k(x)\cdot\lambda_{n,k}(x)$, with $\lambda_{n,k}(x)\in[0,1]$ and
 \[v_{n,k}=v_k\bigg(|v_k|^{2}+\frac{1}{n^{2}}\bigg)^{-\frac{1}{2}}\min\bigg\{\bigg(|v_k|^{2}+\frac{1}{n^{2}}\bigg)^{\frac{1}{2}}-\frac1n,\bigg( |\tilde v_{n,k}|^{2}+\frac{1}{n^{2}}\bigg)^{\frac{1}{2}}-\frac 1n\bigg\}\in W_0^{1,p_k}(\Omega)\cap L^\infty(\Omega).\]
We point out that $v_{n,k}\to v_k$ in $W_0^{1,p_k}(\Omega)$, $|v_{n,k}|\le |v_k|$ a.e. in $\Omega$ with $k=1,\dots,d$.
Let us define 
\[T_k(x):=D_sA_k(x,u_k)|\nabla u_k|^{p_k},\qquad \text{for a.e. $x\in\Omega$.}\]
 Hence,
\[T_k(x)v_{n,k}(x)=\lambda_{n,k}(x)D_sA_k(x,u_k)|\nabla u_k|^{p_k}v_k=\lambda_{n,k}(x)T_k(x)v_k(x),\]
and $0\le\lambda_{n,k}\le1.$
Then 
\[T_kv_{n,k}=\lambda_{n,k}T_kv_k\ge-\lambda_{n,k}\left[D_sA_k(x,u_k)|\nabla u_k|^{p_k}v_k\right]^-\ge-\left[D_sA_k(x,u_k)|\nabla u_k|^{p_k}v_k\right]^-.\]
 According to Fatou's Lemma:
 \[\int_\Omega T_kv_{k}+\int_\Omega\left[D_sA_k(x,u_k)|\nabla u_k|^{p_k}v_k\right]^-\le\liminf_{n\to+\infty}\left\{\int_\Omega T_kv_{n,k}+\int_\Omega\left[D_sA_k(x,u_k)|\nabla u_k|^{p_k}v_k\right]^-\right\}.\]
 By \eqref{eq thm density} we obtain
\begin{align*}
    \int_{\Omega} D_sA_k(x,u_k)|\nabla u_k|^{p_k}v_k&\le\liminf_{n\to+\infty}\int_{\Omega}D_sA_k(x,u_k)|\nabla u_k|^{p_k}v_{n,k}\\
    &=p_k\liminf_{n\to+\infty}\bigg\{\langle\omega_k,v_{n,k}\rangle-\int_\Omega A_k(x,u_k)|\nabla u_k|^{p_k-2}\nabla u_k\cdot \nabla v_{n,k}\bigg\}\\
    &\le C_k,
\end{align*}
for some positive constant $C_k>0$.
   Thus, $D_sA_k(x,u_k)|\nabla u_k|^{p_k}v_k\in L^1(\Omega)$. Testing \eqref{eq thm density} with $v_{n,k}$, we can pass to the limit according to Lebesgue's theorem and obtain:
     \[\int_\Omega A_k(x,u_k)|\nabla u_k|^{p_k-2}\nabla u_k\cdot\nabla v_k+\frac{1}{p_k}\int_\Omega D_sA_k(x,u_k)|\nabla u_k|^{p_k}v_k=\langle\omega_k,v_k\rangle.\]
    %We can conclude as in \cite{brezisbrowder}
\end{proof}
\begin{remark}\label{density bounded functions}
Notice that
\[\left(D_sA_k(x,u_k)|\nabla u_k|^{p_k}\right)v_k\in L^1(\Omega),\quad k=1,\dots,d,\]
for every $v\in L^\infty(\Omega; \R^d)$ and $u\in W_0^{1,\boldsymbol p}(\Omega)$.
Hence, Theorem \ref{density 2.3.2} implies that  $v\in W_0^{1,\boldsymbol p}(\Omega)\cap L^\infty(\Omega; \R^d)$ is an admissible test function for \eqref{P}.
\end{remark}
\section{Regularity result}
Now, we investigate the $L^\infty$-regularity of the weak solutions. 

\begin{theorem}\label{regularity weak solutions}
  Assume that hypotheses \eqref{a.1}--\eqref{a.3} and \eqref{g.1} hold, and that $p_k<N$ for every $k=1,\dots,d$. Let $u=(u_1,\dots,u_d)\in W_0^{1,\boldsymbol{p}}(\Omega)$ be a weak solution of \eqref{P}.
Then $u_1,\dots,u_d\in L^\infty(\Omega)$. 
\end{theorem}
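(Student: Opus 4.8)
The plan is to adapt the De Giorgi--Stampacchia truncation technique of \cite{Vannella23} to our coupled setting, treating the components one at a time; I will show $\esssup_{\Omega}u_k<+\infty$ for each fixed $k$, the bound $\essinf_{\Omega}u_k>-\infty$ following by the symmetric argument. First I observe that any $v_k\in W_0^{1,p_k}(\Omega)\cap L^\infty(\Omega)$ is an admissible test function for the $k$-th equation: by \eqref{g.1} and the Sobolev embeddings $W_0^{1,p_j}(\Omega)\hookrightarrow L^{q_j}(\Omega)$ (recall $q_j<p_j^*$) one gets $g_k(x,u)\in L^{q_k'}(\Omega)$, and since in particular $W_0^{1,p_k}(\Omega)\hookrightarrow L^{q_k}(\Omega)$ this yields $g_k(x,u)\in W^{-1,p_k'}(\Omega)$, so Theorem \ref{density 2.3.2} and Remark \ref{desity bounded functions} apply.

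The first step is a Caccioppoli-type inequality on superlevel sets. Fix $\beta\ge R$ and $L>0$ and test the $k$-th equation with $v_k=\min\{(u_k-\beta)^+,L\}\in W_0^{1,p_k}(\Omega)\cap L^\infty(\Omega)$. On the set $\{u_k>\beta\}$ one has $|u_k|>R$ and $u_k>0$, so \eqref{a.3} forces $D_sA_k(x,u_k)\ge0$ there; since also $v_k\ge0$ and $|\nabla u_k|^{p_k}\ge0$, the term $\tfrac1{p_k}\int_\Omega D_sA_k(x,u_k)|\nabla u_k|^{p_k}v_k$ is nonnegative and may be discarded. Using \eqref{a.2} and letting $L\to+\infty$ (monotone convergence on the left, dominated convergence on the right, $|g_k(x,u)|(u_k-\beta)^+\in L^1(\Omega)$ by Hölder) we obtain
\[
\nu\int_{\{u_k>\beta\}}\bigl|\nabla (u_k-\beta)^+\bigr|^{p_k}\ \le\ \int_{\{u_k>\beta\}}|g_k(x,u)|\,(u_k-\beta)^+,\qquad \beta\ge R,
\]
and the analogous estimate below the level $-R$ by testing with $-\min\{(u_k+\beta)^-,L\}$, using that \eqref{a.3} now forces $D_sA_k(x,u_k)\le0$ on $\{u_k<-\beta\}$.

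The second step improves the integrability of the solution. From \eqref{g.1} one can write $|g_k(x,u)|\le C(1+|u_k|^{q_k-p_k})(1+|u_k|)^{p_k-1}+C\sum_{j\neq k}|u_j|^{q_j(q_k-1)/q_k}$, and $q_k<p_k^*$ together with $u_j\in L^{p_j^*}(\Omega)$ gives that the coefficient $1+|u_k|^{q_k-p_k}$ lies in $L^{r_k}(\Omega)$ with $r_k=p_k^*/(q_k-p_k)>N/p_k$, while the coupling term lies in some $L^{m_k}(\Omega)$, $m_k>1$. Inserting this into the Caccioppoli inequality, estimating the right-hand side by Hölder and the Sobolev inequality, and running the standard iteration --- carried out simultaneously for $k=1,\dots,d$ so that the summability of the coupling terms is updated at each step, and using the strict subcriticality $q_k<p_k^*$ to gain a definite amount of integrability per step --- we reach, after finitely many steps, $g_k(x,u)\in L^{\widetilde r_k}(\Omega)$ with $\widetilde r_k>N/p_k$. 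At this point the Caccioppoli inequality, combined with the Sobolev inequality and the elementary bound $(h-\beta)\,|\{u_k>h\}|^{1/p_k^*}\le\|(u_k-\beta)^+\|_{L^{p_k^*}(\Omega)}$ for $h>\beta\ge R$, yields, for $\varphi(\beta):=|\{u_k>\beta\}|$, an inequality $\varphi(h)\le C(h-\beta)^{-p_k^*}\varphi(\beta)^{\kappa}$ with $\kappa>1$ --- the condition $\widetilde r_k>N/p_k$ being exactly what makes $\kappa>1$ --- and Stampacchia's lemma forces $\varphi(\beta)=0$ for $\beta$ large, i.e.\ $\esssup_{\Omega}u_k<+\infty$; the same argument applied to $-u_k$ gives the lower bound, hence $u_k\in L^\infty(\Omega)$.

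The main obstacle is the lower-order term $\tfrac1{p_k}D_sA_k(x,u_k)|\nabla u_k|^{p_k}$, which has the same growth as the principal part and is therefore not controlled by the remaining terms in a truncation estimate; the way out is the one-sided hypothesis \eqref{a.3}, which makes this term have the sign of $u_k$ outside the ball $\{|u_k|<R\}$, so that it can simply be discarded in the Caccioppoli inequalities above the level $R$ and below $-R$. The secondary difficulty, the coupling between the $d$ components, is handled by performing the integrability bootstrap for all of them at once.
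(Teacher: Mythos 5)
Your overall scheme matches the paper's: first bootstrap the integrability of $u$ so that $g_k(x,u)\in L^m$ with $m>N/p_k$, then run a level-set argument and invoke Stampacchia's lemma to conclude $u_k\in L^\infty$. Your observation that \eqref{a.3} makes the term $\frac{1}{p_k}D_sA_k(x,u_k)|\nabla u_k|^{p_k}v_k$ have the right sign on superlevel sets above $R$ is correct and is the same mechanism the paper uses, and the preliminary justification that bounded Sobolev functions are admissible test functions via Theorem~\ref{density 2.3.2} is also right.

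The genuine gap is in the bootstrap step. The Caccioppoli inequality you derive,
\[
\nu\int_{\{u_k>\beta\}}|\nabla(u_k-\beta)^+|^{p_k}\le\int_{\{u_k>\beta\}}|g_k(x,u)|\,(u_k-\beta)^+,\qquad\beta\ge R,
\]
is a De Giorgi level-set estimate, and it is well suited to proving $L^\infty$ \emph{once} you already know $g_k(x,u)\in L^m$ with $m>N/p_k$; but it does not by itself produce a higher-integrability gain for $u_k$ when $g_k$ only lies in $L^{m_k}$ with $m_k$ close to $1$, which is exactly the situation created by the coupling term $|u_j|^{q_j(q_k-1)/q_k}$ when $q_j$ is close to $p_j^*$. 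To extract genuine integrability improvement you need a different family of test functions — power-type truncations $\Phi_{L,\gamma p_k,\gamma}(u_k)$ as the paper uses following \cite{Vannella23}, or equivalently an explicit Moser iteration — and you then have to verify, as the paper does by invoking \cite[Lemma 2.1]{Vannella23}, that the coupling terms can be absorbed uniformly in $L$ for every fixed $\gamma$. In your write-up this entire mechanism is compressed into the phrase ``running the standard iteration \dots gain a definite amount of integrability per step,'' with no test function and no quantitative estimate of the gain; since that is precisely where the difficulty of the coupled system lies, the argument as written has a hole. Concretely: you would need either to replace $\min\{(u_k-\beta)^+,L\}$ by the power-type test functions and rederive a reverse Hölder inequality in which the exponent can be pushed up freely (the paper's route), or to supplement your Caccioppoli estimate with a separate Gehring/Moser argument and track carefully that the simultaneous iteration in $k$ actually terminates. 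Once that step is supplied, the final Stampacchia argument you sketch is essentially identical to the paper's and is correct.
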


\begin{remark}\label{remaining regularity cases}
We prove Theorem \ref{regularity weak solutions} only when $p_k<N$ for every $k$, since if $p_k>N$, then $u_k\in L^\infty(\Omega)$ by the Sobolev embedding, and if $p_k=N$, we can use the embedding $W_0^{1,N}(\Omega)\hookrightarrow L^r(\Omega)$ for every finite $r$ and replace $p_k^*$ with $r_k$ large enough in the proof below.
\end{remark}

\begin{proof}
    We adapt the procedure in \cite{Vannella23}, see also \cite{carmona2013regularity} and \cite{guedda1989quasilinear}.
For every ${\tilde\gamma}>1$ and $L>1$ such that $L>R+1$, we define
\begin{equation*}
\tilde h_{L,{\tilde\gamma}}(s) =
\begin{cases}
|s|^{{\tilde\gamma}-1} s, & |s| \leq L, \\
{\tilde\gamma} L^{{\tilde\gamma}-1} s + \operatorname{sign}(s) (1-{\tilde\gamma})L^{\tilde\gamma}, & |s| > L,
\end{cases}
\end{equation*}
and $h_{L,{\tilde\gamma}}(s)$ the function such that
\begin{itemize}
    \item $h_{L,{\tilde\gamma}}\equiv0$ for $|s|\le R$; 
    \item $h_{L,{\tilde\gamma}}$ is the line which joins $(R,0)$ with the point $(R+1,(R+1)^{\tilde\gamma})$ for $s\in(R,R+1]$ and it is the line which joins $(-R,0)$ with $(-R-1,-(R+1)^{{\tilde\gamma}})$ for $s\in[-R-1,-R)$;
    \item $h_{L,{\tilde\gamma}}\equiv \tilde h_{L,{\tilde\gamma}}$ for $|s|>R+1$.
\end{itemize}
Since $h_{L,{\tilde\gamma}}$ is differentiable a.e. in $\R$ and its derivative is bounded, for every $t>1$ we can also define
\begin{equation*}
\Phi_{L,t,{\tilde\gamma}}(s) = \int_0^s|h_{L,{\tilde\gamma}}'(r)|^{\frac{t}{{\tilde\gamma}}}\, dr.
\end{equation*}
Notice that $\Phi_{L,t,{\tilde\gamma}}(s)=0$ for $|s|\le R$, while
$\frac{\Phi_{L,t,{\tilde\gamma}}(s)}{s}\ge0$ for every $s\ne0$. Hence, by \eqref{a.3},
\begin{equation}\label{segno regolarità}
D_sA_k(x,u_k)|\nabla u_k|^{p_k} \Phi_{L,t,{\tilde\gamma}}(u_k)\ge0
\end{equation}
 and, in particular,
\[
[D_sA_k(x,u_k)|\nabla u_k|^{p_k}\Phi_{L,t,\tilde\gamma}(u_k)]^-\in L^1(\Omega).
\]
According to Theorem \ref{density 2.3.2}, $\Phi_{L,t,\tilde\gamma}(u_k)$ is an admissible test function.
We test \eqref{weak formulation} with
\[
\Phi_{L,\tilde\gamma p_k,\tilde\gamma}(u_k)e_k.
\]
Here $\{e_1,\dots,e_d\}$ denotes the canonical basis of $\R^d$.  
% \begin{align*}
%     \int_\Omega A_k(x,u_k)|\nabla u_k|^{p_k-2}\nabla u_k\cdot\nabla \Phi_{L,\gamma p_k,\gamma}(u_k)+\frac{1}{p_k}\int_\Omega D_sA_k(x,u_k)|\nabla u_k|^{p_k}\Phi_{L,\gamma p_k,\gamma}(u_k)=\int_\Omega g_k(x,u)\Phi_{L,\gamma p_k,\gamma}(u_k)
% \end{align*}

From \eqref{segno regolarità}, we  obtain that
\[\frac{1}{p_k}\int_\Omega D_sA_k(x,u_k)|\nabla u_k|^{p_k}\Phi_{L,{\tilde\gamma} p_k,{\tilde\gamma}}(u_k)\ge0.\]
Furthermore:
\begin{equation}\label{eq test Phiuk}
\int_\Omega A_k(x,u_k)|\nabla u_k|^{p_k-2}\nabla u_k\cdot\nabla \Phi_{L,\tilde\gamma p_k,\tilde\gamma}(u_k)=\int_\Omega A_k(x,u_k)|\nabla u_k|^{p_k}|h_{L,\tilde\gamma}'(u_k)|^{p_k}.
\end{equation}
Since $h_{L,\tilde\gamma}(u_k)\in W_0^{1,p_k}(\Omega)$, the Sobolev embedding, \eqref{weak formulation} and \eqref{eq test Phiuk} imply that
\begin{align*}
    \left(\int_{\Omega}|h_{L,\tilde\gamma}(u_k)|^{q_k}\right)^{\frac{p_k}{q_k}}&\le c\int_\Omega |\nabla h_{L,\tilde\gamma}(u_k)|^{p_k}=\int_{\Omega}|\nabla u_k|^{p_k}\cdot|h_{L,\tilde\gamma}'(u_k)|^{p_k}\\
    &\le\frac{1}{\nu}\int_\Omega A_k(x,u_k)|\nabla u_k|^{p_k}|h_{L,\tilde\gamma}'(u_k)|^{p_k}\\
    &\le C_1\int_{\Omega}\left(1+|u_k|^{q_k-1}+\sum_{j\ne k}|u_j|^{q_j\frac{q_k-1}{q_k}}\right)|\Phi_{L,\tilde\gamma p_k,\tilde\gamma}(u_k)|,
\end{align*}
for some constants $c,C_1>0$ independent of $L$.
We can now follow the proof of Theorem 1.1 
%equations (2.7)-(2.10)
 in \cite{Vannella23}.

Notice that the estimates \cite[(2.3) and (2.4)]{Vannella23} hold for $|s|\ge R+1$.

 Let $\Omega_{\sigma, w}=\{x\in\Omega\ :\ |w(x)|>\sigma\}$, for some $\sigma>R+1$. The estimate \cite[eq (2.7)]{Vannella23} becomes:
\begin{align*}
    \int_{\{|u_k|\ge R+1\}} (|u_k|^{q_k-1}+1)|\Phi_{L,\tilde\gamma p_k,\tilde\gamma}(u_k)|\le &C_2\sigma^{q_k-1}\left(\int_\Omega |h_{L,\tilde\gamma}(u_k)|^{q_k}\right)^{\frac{p_k}{q_k}\frac{\tilde\gamma p_k+1-p_k}{\tilde\gamma p_k}}\\
    &+C_2\|u_k\|_{L^{q_k}(\Omega_{\sigma,u_k})}^{q_k-p_k}\left(\int_{\Omega}|h_{L,\tilde\gamma}(u_k)|^{q_k}\right)^{\frac{p_k}{q_k}},
\end{align*}
for some $C_2>0$ independent of $L$.
We consider the coupling term:

\begin{align*}
\int_\Omega |u_j|^{q_j\frac{q_k-1}{q_k}}|\Phi_{L,\tilde\gamma p_k,\tilde\gamma}(u_k)|&\le\sigma^{q_j\frac{q_k-1}{q_k}}\int_\Omega |\Phi_{L,\tilde\gamma p_k,\tilde\gamma}(u_k)|+\int_{\Omega_{\sigma,u_j}} |u_j|^{q_j\frac{q_k-1}{q_k}}|\Phi_{L,\tilde\gamma p_k,\tilde\gamma}(u_k)|\\
&\le c_1(\sigma,R)+\sigma^{q_j\frac{q_k-1}{q_k}}\int_{\{|u_k|\ge R+1\}}|\Phi_{L,\tilde\gamma p_k,\tilde\gamma}(u_k)|\\
&\quad+\int_{\Omega_{\sigma,u_j}} |u_j|^{q_j\frac{q_k-1}{q_k}}|\Phi_{L,\tilde\gamma p_k,\tilde\gamma}(u_k)|=:c_1(\sigma,R)+I_{j,k},
\end{align*}
where $c_1(\sigma,R)>0$ is independent of $L$.

Treating the  integral $I_{j,k}$ 
 as in \cite{Vannella23}, we have:
%[pages 5-6]
\begin{align*}
    \sum_{j\ne k}I_{j,k}\le &\sum_{j\ne k}\Bigg\{C_3\sigma^{q_j\frac{q_k-1}{q_k}}\left(\int_\Omega|h_{L,\tilde\gamma}(u_k)|^{q_k}\right)^{\frac{p_k}{q_k}\frac{\tilde\gamma p_k+1-p_k}{\tilde\gamma p_k}}\\
    &+C_3\|u_j\|_{L^{q_j}(\Omega_{\sigma, u_j})}^{\frac{q_j}{q_k}(q_k-p_k)}\left[\left(\int_{\Omega}|h_{L,\tilde\gamma}(u_k)|^{q_k}\right)^{\frac{p_k}{q_k}}+\left(\int_{\Omega}\left|h_{L_{j,k},\tilde\gamma}(u_j)\right|^{q_j}\right)^{\frac{p_k}{q_k}}\right]\Bigg\},
\end{align*}
where $L_{j,k}:=L^{\frac{q_k}{q_j}}$ and $C_3>0$ is independent of $L$. Thus, \cite[eq (2.9)]{Vannella23} takes the form:
\begin{align*}
    \int_{\Omega}|h_{L,\tilde\gamma}(u_k)|^{q_k}&\le C_4+C_4\left(\sigma^{q_k-1}+\sum_{j\ne k}\sigma^{\frac{q_j}{q_k}(q_k-1)}\right)^{\frac{\tilde\gamma q_k}{p_k-1}}+C_4\sum_{j\ne k}\|u_j\|_{L^{q_j}(\Omega_{\sigma,u_j})}^{\frac{q_j}{p_k}(q_k-p_k)}\int_\Omega |h_{L_{j,k},\tilde\gamma}(u_j)|^{q_j},\\
    % &\le C_4\left(\sigma^{q_k-1}+\sum_{j\ne k}\sigma^{\frac{q_j}{q_k}(q_k-1)}\right)^{\frac{\gamma q_k}{p_k-1}}\\&\ \  +C_5\sum_{j\ne k}\|u_j\|_{L^{q_j}(\Omega_{\sigma,u_j})}^{\frac{q_j}{p_k}(q_k-p_k)}\left[\left(\sigma^{q_j-1}+\sum_{j\ne k}\sigma^{\frac{q_k}{q_j}(q_j-1)}\right)^{\frac{\gamma q_j}{p_j-1}}\textcolor{red}{+COSE}\right]
\end{align*}
with $C_4>0$ independent of $L$.
Choosing now $L_k:=L^{\frac{1}{q_k}}$ for every $k=1,\dots,d$, we have
\(
L_{j,k}=L_k^{\frac{q_k}{q_j}}=L^{\frac{1}{q_j}}=L_j,
\)
and the previous estimate becomes
\begin{align*}
    \int_{\Omega}|h_{L_k,\tilde\gamma}(u_k)|^{q_k}&\le C_4+C_4\left(\sigma^{q_k-1}+\sum_{j\ne k}\sigma^{\frac{q_j}{q_k}(q_k-1)}\right)^{\frac{\tilde\gamma q_k}{p_k-1}}+C_4\sum_{j\ne k}\|u_j\|_{L^{q_j}(\Omega_{\sigma,u_j})}^{\frac{q_j}{p_k}(q_k-p_k)}\int_\Omega |h_{L_j,\tilde\gamma}(u_j)|^{q_j}.
\end{align*}
According to \cite[Lemma 2.1]{Vannella23}, we have
\[
\|u_j\|_{L^{q_j}(\Omega_{\sigma,u_j})}^{\frac{q_j}{p_k}(q_k-p_k)}\to0\qquad\text{as $\sigma\to+\infty$},\qquad \text{ for every $j=1,\dots,d$},
\]
and we can choose $\sigma\ge\sigma_1$ and $C_5>0$ independent of $L$ such that
\[\int_\Omega |h_{L_j,\tilde\gamma}(u_j)|^{q_j}\le\sum_{k=1}^d\int_\Omega |h_{L_k,\tilde\gamma}(u_k)|^{q_k}\le C_5,\qquad\text{for every $j=1,\dots,d$}.\]
Passing to the limit as $L\to+\infty$ and using the arbitrariness of $\tilde\gamma>1$, we obtain $u=(u_1,\dots,u_d)\in L^t(\Omega; \R^d)$ for every $t\in(1,\infty)$. In particular, there exists $m>\max_{1\le k\le d}\frac{N}{p_k}$ such that $g_k(x,u)\in L^m(\Omega;\R)$ for every $k=1,\dots,d$. 
{
We define 
    \[
    \xi(s)=
    \begin{cases}
        0&\text{if $|s|\le R$}\\
        \left(1-\frac{R}{|s|}\right)s&\text{if $|s|>R$}.
    \end{cases}
    \]
Thus,
\[\int_\Omega A_k(x,u_k)|\nabla u_k|^{p_k-2}\nabla u_k\cdot\nabla\xi(u_k)+\frac{1}{p_k}\int_\Omega D_sA_k(x,u_k)|\nabla u_k|^{p_k}\xi(u_k)=\int_\Omega g_k(x,u)\xi(u_k).\]
Moreover
\[\int_\Omega A_k(x,u_k)|\nabla u_k|^{p_k-2}\nabla u_k\cdot\nabla\xi(u_k)=\int_{\{|u_k|\ge R\}}A_k(x,u_k)|\nabla u_k|^{p_k-2}\nabla u_k\cdot\nabla u_k.\]
According to \eqref{a.3}, we have
\[\int_\Omega D_sA_k(x,u_k)|\nabla u_k|^{p_k}\xi(u_k)=\int_{\{|u_k|\ge R\}}\left(1-\frac{R}{|u_k|}\right)D_sA_k(x,u_k)|\nabla u_k|^{p_k}u_k\ge0.
\]

    Therefore,
    \begin{align*}
       \nu\left(\int_\Omega|\xi(u_k)|^{p_k^*}\right)^{\frac{p_k}{p_k^*}}&\le \nu C\int_{\{|u_k|>R\}}|\nabla u_k|^{p_k}\le  C\int_\Omega A_k(x,u_k)|\nabla \xi(u_k)|^{p_k}\le C\int_\Omega g_k(x,u)\xi(u_k)\\
        &\le C\left(\int_{\{|u_k|>R\}}|g_k(x,u)|^{(p_k^*)'}\right)^{\frac{1}{(p_k^*)'}}\cdot\left(\int_{\Omega}|\xi(u_k)|^{p_k^*}\right)^{\frac{1}{p_k^*}},
    \end{align*}
    where $(p_k^*)'$ is such that
    \[
    \frac{1}{p_k^*}+\frac{1}{(p_k^*)'}=1.
    \]
Hence,
\begin{align*}
\nu\left(\int_\Omega|\xi(u_k)|^{p_k^*}\right)^{\frac{p_k-1}{p_k^*}}&\le C\left(\int_{\{|u_k|>R\}}|g_k(x,u)|^{(p_k^*)'}\right)^{\frac{1}{(p_k^*)'}}\\
&\le C \left(\int_{\{|u_k|>R\}}|g_k(x,u)|^{m}\right)^{\frac{1}{m}}|\{|u_k|>R\}|^{\frac{1}{(p_k^*)'}-\frac1m}.
\end{align*}
On the other hand, for every $M>R$, we have
\begin{align*}
    \left(\int_\Omega|\xi(u_k)|^{p_k^*}\right)^{\frac{p_k-1}{p_k^*}}\ge \left(\int_{\{|u_k|>M\}}|\xi(u_k)|^{p_k^*}\right)^{\frac{p_k-1}{p_k^*}}\ge (M-R)^{p_k-1}|\{|u_k|>M\}|^{\frac{p_k-1}{p_k^*}}.
\end{align*}
Then,
\[|\{|u_k|>M\}|\le\frac{C_\nu}{(M-R)^{p_k^*}}|\{|u_k|>R\}|^{\theta},\quad\theta=\frac{p_k^*}{p_k-1}\left(\frac{1}{(p_k^*)'}-\frac{1}{m}\right)>1.\]
According to \cite[Lemma 4.1]{stampacchia1965probleme}, there exists $K>0$ such that $|\{|u_k|>M\}|=0$ for every $M\ge K$ which implies $u_k\in L^\infty(\Omega)$ for every $k=1,\dots,d$.
%\[\nu\left(\int_\Omega|\xi(u_k)|^{p_k^*}\right)^{\frac{1}{p_k^*}}\le C\left(\int_{\{|u|>R\}}|g(x,u)|^{(p_k^*)'}\right)^{\frac{1}{(p_k^*)'}}\]
    
    % Now, we define 
    % \[s_{i+1}:=\frac{N}{N-2}(s_i+1)-p,\quad i\in\N,\]
    % and following the iterative procedure in \cite{struwe1983quasilinear}, the claim follows.
}

% We consider the test function
% \[
% \varphi_k =
% \begin{cases} 
% -1, & \text{if } u_k(x) < -R - 1, \\[5pt]
% u_k + R, & \text{if } -R - 1 \leq u_k(x) < -R, \\[5pt]
% 0, & \text{if } |u_k(x)| \leq R, \\[5pt]
% u_k - R, & \text{if } R < u_k(x) \leq R + 1, \\[5pt]
% 1, & \text{if } u_k(x) > R + 1.
% \end{cases}
% \]

% Then
% \begin{align*}
%     \nu\int_{\{|u|>R\}} |\nabla u_k|^{p_k}&\le\nu\int_\Omega|\nabla u_k|^{p_k}\\&\le\int_\Omega |g_k(x,u)\varphi_k|\le\int_{\{|u_k|>R\}}|g_k(x,u)|\cdot|u_k|\\
%     &\le\left(\int_{\{|u_k|>R\}}|g_k(x,u)|^{\frac{p_k^*}{p_k^*-1}}\right)^{\frac{p_k^*-1}{p_k^*}}\cdot\left(\int_{\{|u_k|>R\}}|u_k|^{p_k^*}\right)^{\frac{1}{p_k^*}}
% \end{align*}
% and we conclude as in \cite{Vannella23} with standard Stampacchia regularity theory \cite{stampacchia1965probleme}.
\end{proof}
Consequently, Theorem \ref{regularity weak solutions} and Remark \ref{density bounded functions} imply that any weak solution of \eqref{P} is an admissible test function.

\section{The Concrete Palais--Smale condition}
We recall a compactness result, which follows from \cite[Theorem 3.10]{squassina2009existence}.
\begin{theorem}\label{compactness}
    Assume that hypotheses \eqref{a.1}--\eqref{a.3} hold. Let $\{u_n\}\subset W_0^{1,\boldsymbol{p}}(\Omega)$ be a bounded sequence with $u_n=(u_{n,1},\dots,u_{n,d})$. Let $\{\omega_n\}\subset W^{-1,\boldsymbol{p}'}(\Omega)$ and $\omega\in W^{-1,\boldsymbol{p}'}(\Omega)$ be such that $\omega_n\to\omega$ in $W^{-1,\boldsymbol{p}'}(\Omega)$ and
    \[
    \int_\Omega A_k(x,u_{n,k})|\nabla u_{n,k}|^{p_k-2}\nabla u_{n,k}\cdot\nabla v_k+\frac{1}{p_k}\int_\Omega D_sA_k(x,u_{n,k})|\nabla u_{n,k}|^{p_k}v_k=\langle\omega_{n,k},v_k\rangle,
    \]
    for every $v\in C_c^\infty(\Omega; \R^d)$ and $k=1,\dots,d$.
    Then $\{u_n\}$ admits a strongly convergent subsequence in $W_0^{1,\boldsymbol{p}}(\Omega)$.
\end{theorem}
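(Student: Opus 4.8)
The plan is to reduce to a scalar statement and then follow the line of \cite[Theorem 3.10]{squassina2009existence} (see also \cite{arioli2000existence}), adapted to the anisotropic exponents. Since the right‑hand side $\omega$ is \emph{fixed}, the $d$ equations in the statement are mutually decoupled, so it suffices to fix $k\in\{1,\dots,d\}$ and show that a bounded sequence $\{u_{n,k}\}\subset W_0^{1,p_k}(\Omega)$ with
\[
-\operatorname{div}\big(A_k(x,u_{n,k})|\nabla u_{n,k}|^{p_k-2}\nabla u_{n,k}\big)+\tfrac1{p_k}D_sA_k(x,u_{n,k})|\nabla u_{n,k}|^{p_k}=\omega_k\quad\text{in }\mathcal D'(\Omega)
\]
has a subsequence converging strongly in $W_0^{1,p_k}(\Omega)$. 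By boundedness and reflexivity we may assume, along a subsequence, $u_{n,k}\wto u_k$ in $W_0^{1,p_k}(\Omega)$, $u_{n,k}\to u_k$ in $L^r(\Omega)$ for every $r<p_k^*$ and a.e.\ in $\Omega$, and $\{|\nabla u_{n,k}|^{p_k}\}$ bounded in $L^1(\Omega)$. Setting $b_n:=A_k(x,u_{n,k})\big(|\nabla u_{n,k}|^{p_k-2}\nabla u_{n,k}-|\nabla u_k|^{p_k-2}\nabla u_k\big)\cdot(\nabla u_{n,k}-\nabla u_k)\ge0$, it is enough to prove $b_n\to0$ in measure: combined with $u_{n,k}-u_k\to0$ a.e.\ and the strict monotonicity of $\xi\mapsto|\xi|^{p_k-2}\xi$ this forces $\nabla u_{n,k}\to\nabla u_k$ a.e., and a final Vitali/Brezis–Lieb argument (testing the equation with the admissible function $u_{n,k}-u_k$ to get $\int_\Omega A_k(x,u_{n,k})|\nabla u_{n,k}|^{p_k}\to\int_\Omega A_k(x,u_k)|\nabla u_k|^{p_k}$) upgrades this to strong convergence in $W_0^{1,p_k}(\Omega)$.

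The crux is estimating the truncated energy $\int_{\{|u_{n,k}-u_k|\le M\}}b_n$. For this one tests the equation for $u_{n,k}$ with $w_{n,k}:=\zeta\big(T_M(u_{n,k}-u_k)\big)$, where $T_M$ is the truncation at level $M$ and $\zeta(s)=s\,\mathrm e^{\beta s^2}$ with $\beta\ge \tfrac{C_0^2}{2\nu^2 p_k^2}$; such $w_{n,k}\in W_0^{1,p_k}(\Omega)\cap L^\infty(\Omega)$ is an admissible test function by Theorem \ref{density 2.3.2} and Remark \ref{desity bounded functions}. The role of the exponential is the pointwise absorption $\nu\,\zeta'(s)-\tfrac{C_0}{p_k}|\zeta(s)|\ge\tfrac\nu2\,\zeta'(s)$, which lets the diagonal part $\int_{\{|z_n|\le M\}}A_k(x,u_{n,k})\,\zeta'(z_n)|\nabla u_{n,k}|^{p_k}$ of the principal term (here $z_n:=u_{n,k}-u_k$) swallow the natural‑growth term $\tfrac1{p_k}\int D_sA_k(x,u_{n,k})|\nabla u_{n,k}|^{p_k}w_{n,k}$, which is only $L^1$‑bounded and has the wrong sign on $\{|u_{n,k}|<R\}$. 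What is left after this absorption is: (i) $\langle\omega_k,w_{n,k}\rangle\to0$, because $w_{n,k}$ is bounded in $W_0^{1,p_k}(\Omega)$ and $w_{n,k}\to0$ a.e., hence $w_{n,k}\wto0$; (ii) $\int_{\{|z_n|\le M\}}\zeta'(z_n)A_k(x,u_{n,k})|\nabla u_k|^{p_k-2}\nabla u_k\cdot(\nabla u_{n,k}-\nabla u_k)\to0$, since the vector field multiplying $(\nabla u_{n,k}-\nabla u_k)$ converges strongly in $L^{p_k'}(\Omega)$ by dominated convergence (using $|A_k|\le C_0$, $\zeta'(T_M(\cdot))\le\zeta'(M)$, $u_{n,k}\to u_k$ a.e.) while $\nabla u_{n,k}-\nabla u_k\wto0$; (iii) a tail term on $\{|z_n|>M\}$ bounded by $\tfrac{C_0|\zeta(M)|}{p_k}\int_{\{|z_n|>M\}}|\nabla u_{n,k}|^{p_k}$, which is made small by first letting $n\to\infty$ (the set has vanishing measure, and — exactly as in \cite{squassina2009existence} — the same testing gives an $n$‑uniform bound on $\int_{\{|z_n|\le M\}}\zeta'(z_n)|\nabla u_{n,k}|^{p_k}$ yielding the needed equi‑integrability) and then $M\to\infty$. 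Altogether $\limsup_n\int_{\{|z_n|\le M\}}\zeta'(z_n)A_k(x,u_{n,k})\big(|\nabla u_{n,k}|^{p_k-2}\nabla u_{n,k}-|\nabla u_k|^{p_k-2}\nabla u_k\big)\cdot(\nabla u_{n,k}-\nabla u_k)\le\varepsilon(M)$ with $\varepsilon(M)\to0$.

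Since $\zeta'\ge1$ on $\{|z_n|\le M\}$, this gives $\limsup_n\int_{\{|z_n|\le M\}}b_n\le\varepsilon(M)$. Hence for every $\delta>0$,
\[
\limsup_n\big|\{b_n>\delta\}\big|\le\limsup_n\Big(\tfrac1\delta\textstyle\int_{\{|z_n|\le M\}}b_n+\big|\{|z_n|>M\}\big|\Big)\le\tfrac{\varepsilon(M)}{\delta},
\]
and letting $M\to\infty$ we conclude $b_n\to0$ in measure, which, as explained in the first paragraph, closes the argument for this $k$; running it for each $k=1,\dots,d$ gives $u_n\to u$ strongly in $W_0^{1,\boldsymbol p}(\Omega)$.

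The single real obstacle is the estimate in the second paragraph: the term $\tfrac1{p_k}D_sA_k(x,u_{n,k})|\nabla u_{n,k}|^{p_k}$ has \emph{critical} growth in $\nabla u_{n,k}$, lives only in $L^1$, and cannot simply be discarded because \eqref{a.3} provides a sign only on $\{|s|\ge R\}$. The exponential test function takes care of the absorption, but it is intrinsically unbounded, so one must compose it with the truncation $T_M$ to remain in $W_0^{1,p_k}(\Omega)\cap L^\infty(\Omega)$ (so that Theorem \ref{density 2.3.2} applies) and then control the truncation tail — the step that genuinely requires, as an intermediate output of the same computation, the $L^1$‑equi‑integrability of $|\nabla u_{n,k}|^{p_k}$, precisely as in \cite[Theorem 3.10]{squassina2009existence}.
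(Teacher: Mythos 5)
The paper's "proof" of this lemma is a one-line citation of \cite[Theorem 3.10]{squassina2009existence}, so your reconstruction — decoupling the $d$ scalar equations (legitimate here, since the $k$-th equation involves only $u_{n,k}$, $A_k$ and $\omega_k$, and $\omega$ is fixed), extracting a weak limit, and forcing a.e.\ convergence of the gradients via an exponential test function $\zeta(T_M(u_{n,k}-u_k))$ with the absorption inequality $\nu\zeta'-\tfrac{C_0}{p_k}|\zeta|\ge\tfrac\nu2\zeta'$ — is exactly the route the paper is pointing at, and the choice of $\beta$ and the estimates (i) and (ii) are correct.

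There is, however, one genuine gap, precisely at the tail term you flag as "the single real obstacle." On $\{|z_n|>M\}$ the natural growth contribution is $\tfrac{1}{p_k}\int_{\{|z_n|>M\}}D_sA_k(x,u_{n,k})|\nabla u_{n,k}|^{p_k}\,\zeta(\pm M)$, and the sign hypothesis \eqref{a.3} only tells you that $D_sA_k(x,u_{n,k})$ has the sign of $u_{n,k}$ for $|u_{n,k}|\ge R$; it does \emph{not} tell you it has the sign of $z_n=u_{n,k}-u_k$. On $\{z_n>M\}\cap\{u_k<-M+R\}$ one can have $u_{n,k}\in(-R,R)$ or even $u_{n,k}<-R$, so no sign is available, and you are forced to estimate the whole term by $\tfrac{C_0|\zeta(M)|}{p_k}\int_{\{|z_n|>M\}}|\nabla u_{n,k}|^{p_k}$. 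Your claim that "the same testing gives an $n$-uniform bound on $\int_{\{|z_n|\le M\}}\zeta'(z_n)|\nabla u_{n,k}|^{p_k}$ yielding the needed equi-integrability" does not close this: a bound on the energy over $\{|z_n|\le M\}$ says nothing about $\int_{\{|z_n|>M\}}|\nabla u_{n,k}|^{p_k}$, which is the quantity that must vanish faster than $|\zeta(M)|^{-1}=(M e^{\beta M^2})^{-1}$ blows up — and mere $L^1$-boundedness of $|\nabla u_{n,k}|^{p_k}$ gives no equi-integrability. The clean fix, which is what the Squassina/Canino--Degiovanni argument actually does, is to test with $\zeta\bigl(T_M(u_{n,k})-T_M(u_k)\bigr)$ for $M\ge R$: then on $\{|u_{n,k}|>M\}$ the test function has the sign of $u_{n,k}$, so \eqref{a.3} makes the natural-growth tail nonnegative and one can simply discard it, while the resulting extra cross term $\int_{\{|u_{n,k}|>M\}}\zeta'(\cdot)A_k|\nabla u_{n,k}|^{p_k-2}\nabla u_{n,k}\cdot\nabla T_M(u_k)$ vanishes as $n\to\infty$ because $\chi_{\{|u_{n,k}|>M\}}\nabla T_M(u_k)\to\chi_{\{|u_k|>M\}}\nabla T_M(u_k)=0$ strongly in $L^{p_k}$. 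With this modification no equi-integrability is needed at this stage, and the rest of your argument (measure convergence of $b_n$, a.e.\ convergence of gradients, and the final passage to strong convergence) goes through as you describe.
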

Consequently, we can state the following:
\begin{theorem}\label{CPS condition}
    Assume that hypotheses \eqref{a.1}--\eqref{a.3} and \eqref{g.1} hold. For every $c\in\R$, the following facts are equivalent:
    \begin{itemize}
        \item[$(a)$] $f$ satisfies the $(CPS)_c$-condition;
        \item[$(b)$] every $(CPS)_c$-sequence for $f$ is bounded in $W_0^{1,\boldsymbol{p}}(\Omega)$.
    \end{itemize}
\end{theorem}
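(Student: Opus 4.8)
The plan is to prove the equivalence $(a)\Leftrightarrow(b)$ by treating the two implications separately; the direction $(a)\Rightarrow(b)$ is essentially immediate, while $(b)\Rightarrow(a)$ is where the real content lies and is an application of the compactness Theorem \ref{compactness}. First I would observe that $(a)\Rightarrow(b)$ is trivial: if $f$ satisfies the $(CPS)_c$-condition, then any $(CPS)_c$-sequence has a subsequence converging strongly in $W_0^{1,\boldsymbol p}(\Omega)$, and a sequence with a convergent subsequence — actually we need all $(CPS)_c$-sequences to be bounded, so one argues by contradiction: if some $(CPS)_c$-sequence were unbounded, pass to a subsequence with $\|u_n\|\to\infty$; this is still a $(CPS)_c$-sequence, hence by $(a)$ it has a strongly convergent (in particular bounded) subsequence, contradicting $\|u_n\|\to\infty$.

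For $(b)\Rightarrow(a)$, let $\{u_n\}$ be a $(CPS)_c$-sequence. By hypothesis $(b)$ it is bounded in $W_0^{1,\boldsymbol p}(\Omega)$. By definition of $(CPS)_c$-sequence, setting
\[
\omega_{n,k} := -\mathrm{div}(A_k(x,u_{n,k})|\nabla u_{n,k}|^{p_k-2}\nabla u_{n,k}) + \frac{1}{p_k}D_sA_k(x,u_{n,k})|\nabla u_{n,k}|^{p_k} - g_k(x,u_n),
\]
we have $\omega_n \in W^{-1,\boldsymbol p'}(\Omega)$ eventually and $\omega_n \to 0$ strongly in $W^{-1,\boldsymbol p'}(\Omega)$. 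The idea is then to rewrite this as
\[
\int_\Omega A_k(x,u_{n,k})|\nabla u_{n,k}|^{p_k-2}\nabla u_{n,k}\cdot\nabla v_k + \frac{1}{p_k}\int_\Omega D_sA_k(x,u_{n,k})|\nabla u_{n,k}|^{p_k}v_k = \langle \eta_{n,k}, v_k\rangle
\]
for every $v\in C_c^\infty(\Omega;\R^d)$, where $\eta_{n,k} := \omega_{n,k} + g_k(x,u_n)$. Since $\{u_n\}$ is bounded in $W_0^{1,\boldsymbol p}(\Omega)$, the growth condition \eqref{g.1} combined with the Sobolev embeddings $W_0^{1,p_k}(\Omega)\hookrightarrow L^{q_k}(\Omega)$ (note $q_k<p_k^*$) shows that $\{g_k(\cdot,u_n)\}$ is bounded in $L^{q_k'}(\Omega)\hookrightarrow W^{-1,p_k'}(\Omega)$; together with $\omega_n\to 0$ this gives that $\{\eta_n\}$ is bounded in $W^{-1,\boldsymbol p'}(\Omega)$. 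After passing to a subsequence, $\eta_n \wto \eta$ (in fact, along a further subsequence with $u_n \wto u$, one identifies $\eta_k = \omega_k + g_k(x,u)$ using that $g_k(\cdot,u_n)\to g_k(\cdot,u)$ at least along a subsequence a.e. and then in the appropriate dual space by the compact embedding $W_0^{1,p_k}(\Omega)\hookrightarrow\hookrightarrow L^{q_k}(\Omega)$).

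The main obstacle — and the point requiring care — is matching the hypotheses of Theorem \ref{compactness}, which is stated for a \emph{fixed} right-hand side $\omega$, whereas here we have a varying sequence $\eta_n$. The standard device is to apply a telescoping/diagonal argument or to invoke the version of the compactness result for varying right-hand sides converging in $W^{-1,\boldsymbol p'}(\Omega)$; more concretely, since $\eta_n$ converges (up to subsequence) strongly in $W^{-1,\boldsymbol p'}(\Omega)$ — this requires upgrading weak to strong convergence of $\{g_k(\cdot,u_n)\}$, which follows from the a.e. convergence $u_n\to u$, the growth bound \eqref{g.1}, and the compact embedding $W_0^{1,p_k}(\Omega)\hookrightarrow\hookrightarrow L^{q_k}(\Omega)$ together with a generalized dominated convergence (Vitali) argument giving $g_k(\cdot,u_n)\to g_k(\cdot,u)$ strongly in $L^{q_k'}(\Omega)$ — Theorem \ref{compactness} (or its proof in \cite{squassina2009existence}) applies to yield a strongly convergent subsequence of $\{u_n\}$ in $W_0^{1,\boldsymbol p}(\Omega)$. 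This establishes the $(CPS)_c$-condition and completes the proof.

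Finally, I would remark that, by Corollary \ref{PS e CPS}, the equivalence just proved transfers to the $(PS)_c$-condition in the sense that boundedness of all $(CPS)_c$-sequences suffices to guarantee it; this is exactly the reduction that will be exploited in the next section, where one verifies boundedness of $(CPS)_c$-sequences using the Ambrosetti--Rabinowitz-type conditions \eqref{g.2} and \eqref{a.3}.
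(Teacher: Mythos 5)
Your proof is correct and follows essentially the same route as the paper: the $(a)\Rightarrow(b)$ direction is dispatched by the trivial observation that a strongly convergent subsequence is bounded, and the $(b)\Rightarrow(a)$ direction uses boundedness to extract a weakly convergent subsequence, the compact embeddings $W_0^{1,p_k}(\Omega)\hookrightarrow\hookrightarrow L^{t_k}(\Omega)$ for $t_k<p_k^*$ to pass to the limit in the nonlinearity $g_k(\cdot,u_n)$, and then Theorem \ref{compactness} applied with right-hand sides $\omega_{n,k}+g_k(\cdot,u_n)$. The one place you are actually more careful than the paper is in flagging that Theorem \ref{compactness} is stated for a fixed $\omega$ while the application has a varying sequence, and in spelling out the Vitali-type argument that upgrades the weak convergence $g_k(\cdot,u_n)\rightharpoonup g_k(\cdot,u)$ to strong convergence in $L^{q_k'}(\Omega)\hookrightarrow W^{-1,p_k'}(\Omega)$; the paper records only the weak convergence $\int_\Omega g_k(x,u_n)v\to\int_\Omega g_k(x,u)v$ and then invokes the compactness result with ``$\omega_n$'' in its hypothesis, implicitly relying on the same fact. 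One small slip: when you identify the weak limit $\eta$ you write $\eta_k=\omega_k+g_k(x,u)$, but since $\omega_n\to0$ strongly in $W^{-1,\boldsymbol p'}(\Omega)$ the first term vanishes and the limit is simply $\eta_k=g_k(x,u)$; this does not affect the argument.
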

\begin{proof}
   The implication $(a)\implies(b)$ is a consequence of the strong convergence. Conversely, let $\{u_n\}\subset W_0^{1,\boldsymbol{p}}(\Omega)$ be a bounded $(CPS)_c$-sequence, with $u_n=(u_{n,1},\dots,u_{n,d})$. Up to a subsequence,
   \[
   u_n\rightharpoonup u\quad\text{in }W_0^{1,\boldsymbol{p}}(\Omega),
   \]
   and, for every $k=1,\dots,d$,
   \[
   u_{n,k}\to u_k\quad\text{in }L^{t_k}(\Omega),\qquad p_k<t_k<p_k^*.
   \]
   By \eqref{g.1} and the compact Sobolev embeddings, we have
   \[
   g_k(x,u_n)\to g_k(x,u)\quad\text{in }W^{-1,p_k'}(\Omega),\qquad k=1,\dots,d.
   \]
   If
   \[
   \delta_{n,k}:=-\operatorname{div}(A_k(x,u_{n,k})|\nabla u_{n,k}|^{p_k-2}\nabla u_{n,k})+\frac{1}{p_k}D_sA_k(x,u_{n,k})|\nabla u_{n,k}|^{p_k}-g_k(x,u_n),
   \]
   then $\delta_{n,k}\to0$ in $W^{-1,p_k'}(\Omega)$ by the definition of a $(CPS)_c$-sequence. Hence $\omega_{n,k}:=g_k(x,u_n)+\delta_{n,k}$ converges to $g_k(x,u)$ in $W^{-1,p_k'}(\Omega)$. Applying Theorem \ref{compactness} with $\omega_n=(\omega_{n,1},\dots,\omega_{n,d})$, we obtain a strongly convergent subsequence.
\end{proof}
%\textcolor{red}{Proof? Citare?}

\begin{proposition}\label{prop estimate}
Assume that $p_1=\dots=p_d=:p$ and that hypotheses \eqref{a.1}-\eqref{a.2}, \eqref{a.4p}, and \eqref{g.1}   hold. Let $c\in\R$ and let
$\{u_n\}\subset W_0^{1,p}(\Omega;\R^d)$ be a $(CPS)_c$-sequence for the
functional $f$. Suppose that
\begin{equation}\label{eq positive small derivative term}
I_n:=\sum_{k=1}^d\int_{\{|u_n|\le R\}}
D_sA_k(x,u_{n,k})|\nabla u_{n,k}|^pu_{n,k}\ge0
\end{equation}
for every $n$. Let
\[
    \Omega_{n,R}=\{x\in\Omega: |u_n|\le R\},
    \qquad
    \Omega_{n,R}^c=\{x\in\Omega: |u_n|>R\}. 
\]
Then, for every $\varepsilon>0$, there exists
$K(R,\varepsilon)>0$  such that
\begin{equation}\label{eq prop estimate equal p statement}
\sum_{k=1}^d\int_{\Omega_{n,R}}A_k(x,u_{n,k})|\nabla u_{n,k}|^p
\le\varepsilon\sum_{k=1}^d\int_{\Omega_{n,R}^c}A_k(x,u_{n,k})|\nabla u_{n,k}|^p+K(R,\varepsilon).
\end{equation}
\end{proposition}
\begin{proof}
For every $k=1,\dots,d$, set
\[
\omega_{n,k}:=-\operatorname{div}\left(A_k(x,u_{n,k})|\nabla u_{n,k}|^{p-2}\nabla u_{n,k}\right)
+\frac1pD_sA_k(x,u_{n,k})|\nabla u_{n,k}|^p-g_k(x,u_n).
\]
Let $\delta\in(0,1)$ and let $\eta_\delta:[0,+\infty)\to[0,1]$ be the
Lipschitz function defined by
\[
\eta_\delta(t)=
\begin{cases}
1, & 0\le t\le R,\\[1mm]
1-\delta\log\left(\dfrac{t}{R}\right), & R<t<Re^{1/\delta},\\[3mm]
0, & t\ge Re^{1/\delta}.
\end{cases}
\]
This function  satisfies
\[
0\le\eta_\delta\le1,
\qquad
|t\eta_\delta'(t)|\le\delta\quad\text{for a.e. }t>0.
\]
We define
\[
\theta_\delta(x):=\eta_\delta(|u_n(x)|)u_n(x),
\qquad
\theta_{\delta,k}(x):=\eta_\delta(|u_n(x)|)u_{n,k}(x).
\]
Then $\theta_\delta\in W_0^{1,p}(\Omega;\R^d)\cap L^\infty(\Omega;\R^d)$ and
$|\theta_\delta|\le Re^{1/\delta}$. Moreover, writing the gradient component by
component, for a.e. $x\in\Omega$ and for every $k=1,\dots,d$ we have
\begin{equation}\label{eq gradient theta delta component}
\nabla\theta_{\delta,k}
=\eta_\delta(|u_n|)\nabla u_{n,k}
+\eta_\delta'(|u_n|)\frac{u_{n,k}}{|u_n|}
\sum_{j=1}^d u_{n,j}\nabla u_{n,j}
\end{equation}
on the set $\{|u_n|>0\}$, while the second term is understood as zero on
$\{|u_n|=0\}$. In particular, since
$|u_{n,k}|\le |u_n|$ and $|u_{n,j}|\le |u_n|$, we have
\[
|\nabla\theta_{\delta,k}|
\le |\nabla u_{n,k}|+
| |u_n|\eta_\delta'(|u_n|)|\sum_{j=1}^d|\nabla u_{n,j}|
\le |\nabla u_{n,k}|+\delta\sum_{j=1}^d|\nabla u_{n,j}|.
\]
Consequently, for a suitable constant $C>0$ independent of $n$ and $\delta$, 
\begin{equation}\label{stima gradiente}
\sum_{k=1}^d |\nabla\theta_{\delta,k}|^p
\le C\sum_{k=1}^d |\nabla u_{n,k}|^p.
\end{equation}
Since $ u_n$ solves
\[
-\operatorname{div}\left(A_k(x,u_{n,k})|\nabla u_{n,k}|^{p-2}\nabla u_{n,k}\right)
+\dfrac1pD_sA_k(x,u_{n,k})|\nabla u_{n,k}|^p
=g_k(x,u_n)+\omega_{n,k}\quad\text{in $\mathcal D'(\Omega)$}.
\]
According to Theorem \ref{density 2.3.2}, we get
\begin{equation}\label{eq theta delta test identity}
\begin{aligned}
\sum_{k=1}^d\int_\Omega A_k(x,u_{n,k})|\nabla u_{n,k}|^{p-2}\nabla u_{n,k}\cdot\nabla\theta_{\delta,k}
&+\frac1p\sum_{k=1}^d\int_\Omega D_sA_k(x,u_{n,k})|\nabla u_{n,k}|^p\theta_{\delta,k}\\
&=\sum_{k=1}^d\int_\Omega g_k(x,u_n)\theta_{\delta,k}+\sum_{k=1}^d\langle \omega_{n,k},\theta_{\delta,k}\rangle.
\end{aligned}
\end{equation}
By \eqref{eq positive small derivative term} and \eqref{a.4p}, we also deduce that
\begin{equation}\label{eq nonnegative theta delta zero order}
\frac1p\sum_{k=1}^d\int_\Omega D_sA_k(x,u_{n,k})|\nabla u_{n,k}|^p\theta_{\delta,k}\ge0.
\end{equation}

Set
\[
E_{n,\delta}:=\{R<|u_n|<Re^{1/\delta}\},
\qquad
M:=\frac{C_0}{\nu p},
\qquad
\kappa:=d\left(\frac{p-1}{p}+M\right).
\]
We estimate the first term on the left-hand side of \eqref{eq theta delta test identity}. On
$\{|u_n|\le R\}$ we have $\eta_\delta(|u_n|)=1$, hence
\[
A_k(x,u_{n,k})|\nabla u_{n,k}|^{p-2}\nabla u_{n,k}\cdot
\nabla\theta_{\delta,k}
=A_k(x,u_{n,k})|\nabla u_{n,k}|^p.
\]
On $E_{n,\delta}$, using the componentwise formula
\eqref{eq gradient theta delta component}, we get
\begin{align*}
&\sum_{k=1}^d A_k(x,u_{n,k})|\nabla u_{n,k}|^{p-2}
\nabla u_{n,k}\cdot\nabla\theta_{\delta,k}\\
&=\eta_\delta(|u_n|)
\sum_{k=1}^d A_k(x,u_{n,k})|\nabla u_{n,k}|^p\\
&\quad+\frac{\eta_\delta'(|u_n|)}{|u_n|}
\sum_{k=1}^d A_k(x,u_{n,k})u_{n,k}|\nabla u_{n,k}|^{p-2}
\nabla u_{n,k}\cdot\sum_{j=1}^d u_{n,j}\nabla u_{n,j}.
\end{align*}
Since the first term is nonnegative, and using
$\eta_\delta'(|u_n|)=-\delta/|u_n|$ on $E_{n,\delta}$,
 %hence
%$||u_n|\eta_\delta'(|u_n|)|\le\delta$, 
together with
$|u_{n,k}|\le |u_n|$ and $|u_{n,j}|\le |u_n|$, we obtain
\begin{align*}
&\sum_{k=1}^d A_k(x,u_{n,k})|\nabla u_{n,k}|^{p-2}
\nabla u_{n,k}\cdot\nabla\theta_{\delta,k}\ge -\delta\sum_{k,j=1}^d
A_k(x,u_{n,k})|\nabla u_{n,k}|^{p-1}|\nabla u_{n,j}|.
\end{align*}
By Young's inequality and by \eqref{a.1}-\eqref{a.2}, on $E_{n,\delta}$ we have
\begin{align*}
&\delta\sum_{k,j=1}^d A_k(x,u_{n,k})|\nabla u_{n,k}|^{p-1}|\nabla u_{n,j}|\\
&\le \delta\sum_{k,j=1}^d A_k(x,u_{n,k})
\left(\frac{p-1}{p}|\nabla u_{n,k}|^p+
\frac1p|\nabla u_{n,j}|^p\right)\\
&\le d\delta\left(\frac{p-1}{p}+\frac{C_0}{\nu p}\right)
\sum_{k=1}^d A_k(x,u_{n,k})|\nabla u_{n,k}|^p
=\delta\kappa\sum_{k=1}^d A_k(x,u_{n,k})|\nabla u_{n,k}|^p.
\end{align*}
Consequently,
\begin{equation}\label{stima theta_delta}
\begin{aligned}
&\sum_{k=1}^d\int_\Omega A_k(x,u_{n,k})|\nabla u_{n,k}|^{p-2}
\nabla u_{n,k}\cdot\nabla\theta_{\delta,k}\\
&\quad\ge
\sum_{k=1}^d\int_{\{|u_n|\le R\}}A_k(x,u_{n,k})|\nabla u_{n,k}|^p
-\delta\kappa\sum_{k=1}^d\int_{E_{n,\delta}}A_k(x,u_{n,k})|\nabla u_{n,k}|^p.
\end{aligned}
\end{equation}
Thus, inserting \eqref{stima theta_delta} in \eqref{eq theta delta test identity} and using
\eqref{eq nonnegative theta delta zero order}, we get
\begin{equation}\label{stima theta_delta rhs}
\begin{aligned}
\sum_{k=1}^d\int_{\{|u_n|\le R\}}A_k(x,u_{n,k})|\nabla u_{n,k}|^p
&-\delta\kappa\sum_{k=1}^d\int_{E_{n,\delta}}A_k(x,u_{n,k})|\nabla u_{n,k}|^p\\
&\quad\le \sum_{k=1}^d\int_\Omega g_k(x,u_n)\theta_{\delta,k}
+\sum_{k=1}^d\langle \omega_{n,k},\theta_{\delta,k}\rangle.
\end{aligned}
\end{equation}
Now, from (weighted) Young's inequality and by the strong convergence of
$\{\omega_n\}$ in $W^{-1,p'}(\Omega;\R^d)$, we have that there exists a
constant $C_1=C_1(\delta)>0$ such that
\begin{align*}
\sum_{k=1}^d\langle \omega_{n,k},\theta_{\delta,k}\rangle
&\le C_1+\delta\|\theta_\delta\|_{W_0^{1,p}(\Omega;\R^d)}^p
=C_1+\delta\sum_{k=1}^d\int_\Omega|\nabla\theta_{\delta,k}|^p.
\end{align*}
By \eqref{stima gradiente}, we have
\begin{equation}\label{stima omega_ntheta_delta}
\begin{aligned}
\sum_{k=1}^d\langle \omega_{n,k},\theta_{\delta,k}\rangle
&\le C_1+\delta\sum_{k=1}^d\int_\Omega|\nabla\theta_{\delta,k}|^p
\le C_1+C\delta\sum_{k=1}^d\int_\Omega |\nabla u_{n,k}|^p\\
&\le C_1+C\delta\sum_{k=1}^d\int_{\{|u_n|\le R\}}|\nabla u_{n,k}|^p
+C\delta\sum_{k=1}^d\int_{\{|u_n|>R\}}|\nabla u_{n,k}|^p.
\end{aligned}\end{equation}

Taking into account \eqref{a.2}, we obtain
\[
\sum_{k=1}^d\langle \omega_{n,k},\theta_{\delta,k}\rangle
\le C_1+\frac{C\delta}{\nu}\sum_{k=1}^d\int_{\{|u_n|\le R\}}A_k(x,u_{n,k})|\nabla u_{n,k}|^p
+\frac{C\delta}{\nu}\sum_{k=1}^d\int_{\{|u_n|>R\}}A_k(x,u_{n,k})|\nabla u_{n,k}|^p.
\]
Furthermore, since $\theta_\delta=0$ on
$\{|u_n|\ge Re^{1/\delta}\}$ and
$|\theta_\delta|\le Re^{1/\delta}$, hypothesis \eqref{g.1} implies that
there exists a constant $C_2=C_2(R,\delta)>0$ such that
\begin{align*}
\sum_{k=1}^d\int_\Omega g_k(x,u_n)\theta_{\delta,k}
&\le \sum_{k=1}^d\int_\Omega |g_k(x,u_n)|\,|\theta_{\delta,k}|\\
&\le C\sum_{k=1}^d\int_{\{|u_n|\le Re^{1/\delta}\}}
\left(1+|u_{n,k}|^{q_k-1}
+\sum_{j\ne k}|u_{n,j}|^{q_j\frac{q_k-1}{q_k}}\right)|\theta_{\delta,k}|\le C_2.
\end{align*}
Hence, from \eqref{stima theta_delta rhs}--\eqref{stima omega_ntheta_delta}, we infer that there exists $C_3=C_3(R,\delta)>0$ such that
\begin{align*}
\left(1-\frac{C\delta}{\nu}\right)
\sum_{k=1}^d\int_{\{|u_n|\le R\}}A_k(x,u_{n,k})|\nabla u_{n,k}|^p
&\le \delta\left(\kappa+\frac{C}{\nu}\right)
\sum_{k=1}^d\int_{\{|u_n|>R\}}A_k(x,u_{n,k})|\nabla u_{n,k}|^p+C_3,
\end{align*}
Choosing
$\delta\in(0,1)$ sufficiently small so that
\[
0<\frac{\delta(\kappa+C/\nu)}{1-C\delta/\nu}\le\varepsilon,
\]
the conclusion follows, with $K(R,\varepsilon):=C_3/(1-C\delta/\nu)$.

\end{proof}

\begin{theorem}\label{CPS bounded equal p}
Assume that $p_1=\dots=p_d=:p$, and that hypotheses \eqref{a.1}--\eqref{a.2}, \eqref{a.4p}
and \eqref{g.1}--\eqref{g.2} hold. Let
$c\in\R$ and let
$\{u_n\}\subset W_0^{1,p}(\Omega;\R^d)$ be a $(CPS)_c$-sequence. Then
$\{u_n\}$ is bounded in $W_0^{1,p}(\Omega;\R^d)$.
\end{theorem}
\begin{proof}

We write $u_n=(u_{n,1},\dots,u_{n,d})$ and define
$\omega_n=(\omega_{n,1},\dots,\omega_{n,d})$ by
\[
\omega_{n,k}:=-\operatorname{div}(A_k(x,u_{n,k})|\nabla u_{n,k}|^{p-2}\nabla u_{n,k})+\frac1pD_sA_k(x,u_{n,k})|\nabla u_{n,k}|^p-g_k(x,u_n).
\]
Then $\omega_{n,k}\to0$ in $W^{-1,p'}(\Omega)$ for every $k=1,\dots,d$.
We argue by contradiction. Assume that $\{u_n\}$ is unbounded. Passing to a
subsequence, we may suppose that
\begin{equation}\label{eq contradiction norm infinity equal p}
\|u_n\|_{W_0^{1,p}(\Omega;\R^d)}\to+
\infty.
\end{equation}
Set
\[
I_n:=\sum_{k=1}^d\int_{\Omega_{n,R}}
D_sA_k(x,u_{n,k})|\nabla u_{n,k}|^pu_{n,k}.
\]
We will distinguish two cases, according to the sign of $I_n$.
% If $I_n\le0$,
%the derivative term can be estimated directly. If $I_n>0$, we use Proposition
%\ref{prop estimate}. In both cases we obtain the same upper bound, which will
%lead to a contradiction with \eqref{eq contradiction norm infinity equal p}.

Arguing as in the proof of Proposition \ref{prop estimate}, the negative part of
$D_sA_k(x,u_{n,k})|\nabla u_{n,k}|^pu_{n,k}$ belongs to $L^1(\Omega)$.
Therefore, by Theorem \ref{density 2.3.2}, $u_{n,k}$ is an admissible test
function and
\begin{align*}
-\|\omega_{n,k}\|_{W^{-1,p'}(\Omega)}\|u_{n,k}\|_{W_0^{1,p}(\Omega)}
&\le\langle\omega_{n,k},u_{n,k}\rangle\\
&=\int_\Omega A_k(x,u_{n,k})|\nabla u_{n,k}|^p
+\frac1p\int_\Omega D_sA_k(x,u_{n,k})|\nabla u_{n,k}|^pu_{n,k}\\
&\quad-\int_\Omega g_k(x,u_n)u_{n,k}.
\end{align*}
We first estimate the second integral in the previous formula. Since
$\Omega_{n,R}=\{x\in\Omega:|u_n|\le R\}$, by \eqref{a.1} and \eqref{a.2},
\[
\int_{\Omega_{n,R}}D_sA_k(x,u_{n,k})|\nabla u_{n,k}|^pu_{n,k}
\le C_0R\int_{\Omega_{n,R}}|\nabla u_{n,k}|^p
\le \frac{C_0R}{\nu}\int_{\Omega_{n,R}}A_k(x,u_{n,k})|\nabla u_{n,k}|^p.
\]
Moreover, by \eqref{a.4p},
\[
\sum_{k=1}^d\int_{\Omega_{n,R}^c}D_sA_k(x,u_{n,k})|\nabla u_{n,k}|^pu_{n,k}
\le \gamma\sum_{k=1}^d\int_{\Omega_{n,R}^c}A_k(x,u_{n,k})|\nabla u_{n,k}|^p.
\]
Let $\gamma'\in(\gamma,\mu-p)$ and choose $\varepsilon>0$ such that
$C_0R\varepsilon/\nu\le \gamma'-\gamma$.
In the case $I_n>0$, Proposition \ref{prop estimate} gives
\[
\sum_{k=1}^d\int_{\Omega_{n,R}}A_k(x,u_{n,k})|\nabla u_{n,k}|^p
\le\varepsilon
\sum_{k=1}^d\int_{\Omega_{n,R}^c}A_k(x,u_{n,k})|\nabla u_{n,k}|^p
+K(R,\varepsilon).
\]
 In the case $I_n\le0$ we have
\[
\sum_{k=1}^d\int_{\Omega_{n,R}}
D_sA_k(x,u_{n,k})|\nabla u_{n,k}|^pu_{n,k}\le0.
\]
Hence, in both alternatives,
\begin{equation}\label{eq derivative term prop boundedness}
\begin{aligned}
&\sum_{k=1}^d\int_\Omega D_sA_k(x,u_{n,k})|\nabla u_{n,k}|^pu_{n,k}\\
&\quad\le \left(\frac{C_0R\varepsilon}{\nu}+\gamma\right)
\sum_{k=1}^d\int_{\Omega_{n,R}^c}A_k(x,u_{n,k})|\nabla u_{n,k}|^p+K(R,\varepsilon)\\
&\quad\le \gamma'\sum_{k=1}^d\int_\Omega A_k(x,u_{n,k})|\nabla u_{n,k}|^p+K(R,\varepsilon).
\end{aligned}
\end{equation}
Since $\{u_n\}$ is a $(CPS)_c$-sequence, there exists $C>0$ such that
\begin{align*}
C+o(1)\sum_{k=1}^d\|u_{n,k}\|_{W_0^{1,p}(\Omega)}
&\ge f(u_n)-\frac1\mu\sum_{k=1}^d\langle\omega_{n,k},u_{n,k}\rangle\\
&=\sum_{k=1}^d\frac{\mu-p}{p\mu}\int_\Omega A_k(x,u_{n,k})|\nabla u_{n,k}|^p\\
&\quad-\sum_{k=1}^d\frac1{p\mu}\int_\Omega D_sA_k(x,u_{n,k})|\nabla u_{n,k}|^pu_{n,k}\\
&\quad+\int_\Omega\left[\frac1\mu\sum_{k=1}^dg_k(x,u_n)u_{n,k}-G(x,u_n)\right].
\end{align*}
By \eqref{g.2}, the last integrand is nonnegative on $\Omega_{n,R}^c$; on
$\Omega_{n,R}$ it is bounded from below by a constant depending only on $R$.
Using \eqref{eq derivative term prop boundedness}, we infer that
\[
C+o(1)\sum_{k=1}^d\|u_{n,k}\|_{W_0^{1,p}(\Omega)}
\ge \sum_{k=1}^d\frac{\mu-p-\gamma'}{p\mu}
\int_\Omega A_k(x,u_{n,k})|\nabla u_{n,k}|^p-K(R,\varepsilon).
\]
Since $\gamma'<\mu-p$ and $A_k(x,s)\ge\nu$, there is $C_1>0$ such that
\[
C+o(1)\sum_{k=1}^d\|u_{n,k}\|_{W_0^{1,p}(\Omega)}
\ge C_1\sum_{k=1}^d\|u_{n,k}\|_{W_0^{1,p}(\Omega)}^p-K(R,\varepsilon).
\]
As $p>1$, this implies the boundedness of $\{u_n\}$ in
$W_0^{1,p}(\Omega;\R^d)$, which contradicts
\eqref{eq contradiction norm infinity equal p}.
\end{proof}
\begin{theorem}\label{CPS bounded}
  Assume that hypotheses \eqref{a.1}--\eqref{a.4} and \eqref{g.1}--\eqref{g.2} hold. Let $c\in\R$ be a real number and let $\{u_n\}\subset W_0^{1,\boldsymbol{p}}(\Omega)$ be a $(CPS)_c$-sequence. Then $\{u_n\}$ is bounded in $W_0^{1,\boldsymbol{p}}(\Omega)$.
\end{theorem}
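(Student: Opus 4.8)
The plan is to show that a $(CPS)_c$-sequence $\{u_n\}$ is bounded by testing the concrete equation with $u_n$ itself (which is admissible by Remark \ref{desity bounded functions} once we know the relevant negative parts are integrable, or directly because $u_n\in W_0^{1,\boldsymbol p}(\Omega)$ with the truncation argument of Theorem \ref{density 2.3.2}), and combining the resulting identity with the energy bound $f(u_n)\to c$. Concretely, from $f(u_n)\to c$ we have
\[
\sum_{k=1}^d\frac1{p_k}\int_\Omega A_k(x,u_{n,k})|\nabla u_{n,k}|^{p_k}-\int_\Omega G(x,u_n)=c+o(1),
\]
and from $\langle f'(u_n),u_n\rangle=\langle\omega_n,u_n\rangle$ with $\|\omega_n\|_{W^{-1,\boldsymbol p'}}\to0$ we get
\[
\sum_{k=1}^d\int_\Omega A_k(x,u_{n,k})|\nabla u_{n,k}|^{p_k}+\sum_{k=1}^d\frac1{p_k}\int_\Omega D_sA_k(x,u_{n,k})|\nabla u_{n,k}|^{p_k}u_{n,k}-\sum_{k=1}^d\int_\Omega g_k(x,u_n)u_{n,k}=o(1)\sum_k\|u_{n,k}\|.
\]
Then I would form the combination $\mu f(u_n)-\langle f'(u_n),u_n\rangle$ (with $\mu>\overline p$ from \eqref{g.2}); the nonlinear terms $-\mu G+\sum_k g_k u_{n,k}$ are controlled from below on $\{|u_n|\ge R\}$ by \eqref{g.2} (and bounded on $\{|u_n|<R\}$), while on the leading terms one gets
\[
\sum_{k=1}^d\Big(\frac{\mu}{p_k}-1\Big)\int_\Omega A_k|\nabla u_{n,k}|^{p_k}-\sum_{k=1}^d\frac1{p_k}\int_\Omega D_sA_k|\nabla u_{n,k}|^{p_k}u_{n,k}.
\]

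The role of \eqref{a.3} is to absorb the bad term $\frac1{p_k}\int D_sA_k|\nabla u_{n,k}|^{p_k}u_{n,k}$: on $\{|u_{n,k}|\ge R\}$ one has $0\le u_{n,k}D_sA_k(x,u_{n,k})\le\gamma A_k(x,u_{n,k})$, so this integral is at most $\frac{\gamma}{p_k}\int_{\{|u_{n,k}|\ge R\}}A_k|\nabla u_{n,k}|^{p_k}$, and on $\{|u_{n,k}|<R\}$ it is bounded by $\frac{C_0 R}{p_k}\int_{\{|u_{n,k}|<R\}}|\nabla u_{n,k}|^{p_k}$. The first of these leaves a factor $\frac{\mu-\overline p-\gamma}{\overline p}>0$ in front of $\int_{\{|u_{n,k}|\ge R\}}A_k|\nabla u_{n,k}|^{p_k}$ (using $\gamma\in(0,\mu-\overline p)$), so this part of the gradient energy is controlled, modulo an $o(1)\|u_n\|$ term and additive constants. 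The remaining piece $\int_{\{|u_{n,k}|<R\}}|\nabla u_{n,k}|^{p_k}$ — exactly the quantity estimated in Proposition \ref{prop 3.1} — is then handled by invoking that proposition with $\rho=R$ and a suitably small $\varepsilon$: it bounds $\int_{\Omega_1^R}A_k|\nabla u_{n,k}|^{p_k}$ by $\varepsilon\int_{\Omega_2^R}A_k|\nabla u_{n,k}|^{p_k}+K$, and since the high-energy part is already under control, the whole $\int_\Omega A_k|\nabla u_{n,k}|^{p_k}$, hence $\|u_{n,k}\|_{W_0^{1,p_k}}^{p_k}$ via \eqref{a.2}, is bounded by $o(1)\|u_n\|+\text{const}$, giving boundedness after a standard absorption.

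**The main obstacle** I anticipate is bookkeeping the split into $\{|u_{n,k}|<R\}$ and $\{|u_{n,k}|\ge R\}$ cleanly across all $d$ components with different exponents $p_k$, and in particular making the absorption rigorous: the inequality one reaches has the schematic form $\sum_k a_k\|u_{n,k}\|_{W_0^{1,p_k}}^{p_k}\le \text{const}+o(1)\sum_k\|u_{n,k}\|_{W_0^{1,p_k}}$ with $a_k>0$, and since each $p_k>1$ this does force $\sup_n\|u_{n,k}\|_{W_0^{1,p_k}}<\infty$ — but one must be slightly careful that the $o(1)$ multiplies a first power while the left side has $p_k$-th powers, which is precisely what makes the absorption work. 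A secondary technical point is justifying that $u_n$ (or the truncations $T_m(u_{n,k})$) are legitimate test functions in the concrete equation; this is where Theorem \ref{density 2.3.2} is used, noting that by \eqref{a.3} the term $D_sA_k(x,u_{n,k})|\nabla u_{n,k}|^{p_k}u_{n,k}$ has integrable negative part (it is $\ge0$ on $\{|u_{n,k}|\ge R\}$ and bounded below by $-C_0 R|\nabla u_{n,k}|^{p_k}\in L^1$ on $\{|u_{n,k}|<R\}$), so Theorem \ref{density 2.3.2} applies with $v=u_n$.
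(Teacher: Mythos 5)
Your proposal is correct and follows essentially the same route as the paper: test the concrete equation with $u_{n,k}$ (justified via Theorem \ref{density 2.3.2} and the sign information from \eqref{a.3}), form the combination $f(u_n)-\tfrac{1}{\mu}\langle\omega_n,u_n\rangle$, use \eqref{g.2} for the nonlinear terms, split the $D_sA_k$ integral over $\{|u_{n,k}|<R\}$ and $\{|u_{n,k}|\ge R\}$ via \eqref{a.1}, \eqref{a.3}, absorb the low-level gradient energy through Proposition \ref{prop 3.1} with $\varepsilon$ chosen so that $\gamma+\tfrac{CR\varepsilon}{\nu}<\mu-\overline p$, and conclude by the standard $p_k>1$ absorption. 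Your extra remark on the integrability of the negative part of $D_sA_k|\nabla u_{n,k}|^{p_k}u_{n,k}$, needed to invoke Theorem \ref{density 2.3.2}, is a point the paper leaves implicit.
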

\begin{proof}
    We write $u_n=(u_{n,1},\dots,u_{n,d})$ and we define $\omega_n=(\omega_{n,1},\dots,\omega_{n,d})$ as follows:
    \[
    \omega_{n,k}:=-\operatorname{div}(A_k(x,u_{n,k})|\nabla u_{n,k}|^{p_k-2}\nabla u_{n,k})+\frac{1}{p_k}D_sA_k(x,u_{n,k})|\nabla u_{n,k}|^{p_k}-g_k(x,u_n).
    \]
    We first justify that $u_{n,k}$ is an admissible test function. On $\{|u_{n,k}|\ge R\}$, assumption \eqref{a.3} gives
    \[
    u_{n,k}D_sA_k(x,u_{n,k})\ge0,
    \]
    while on $\{|u_{n,k}|<R\}$, by \eqref{a.1},
    \[
    \left|D_sA_k(x,u_{n,k})u_{n,k}|\nabla u_{n,k}|^{p_k}\right|
    \le C_0R|\nabla u_{n,k}|^{p_k}\in L^1(\Omega).
    \]
    Therefore,
    \[
    \left(D_sA_k(x,u_{n,k})|\nabla u_{n,k}|^{p_k}u_{n,k}\right)^-\in L^1(\Omega),
    \]
    and Theorem \ref{density 2.3.2} can be applied with $v_k=u_{n,k}$. Hence
    \begin{align*}
        -\|\omega_{n,k}\|_{W^{-1,p_k'}}\cdot\|u_{n,k}\|_{W_0^{1,p_k}}&\le\langle\omega_{n,k},u_{n,k}\rangle\\
        &=\int_\Omega A_k(x,u_{n,k})|\nabla u_{n,k}|^{p_k}+\frac{1}{p_k}\int_\Omega D_sA_k(x,u_{n,k})|\nabla u_{n,k}|^{p_k}u_{n,k}\\
        &\quad-\int_\Omega g_k(x,u_n)u_{n,k}.
    \end{align*}
    By \eqref{a.4}, we have
    \begin{equation}\label{eq global derivative estimate}
        \int_\Omega D_sA_k(x,u_{n,k})|\nabla u_{n,k}|^{p_k}u_{n,k}
        \le\gamma\int_\Omega A_k(x,u_{n,k})|\nabla u_{n,k}|^{p_k},
        \qquad k=1,\dots,d.
    \end{equation}
    Since $\{u_n\}$ is a $(CPS)_c$-sequence, there exists $C_1>0$ such that
 \begin{align*}
     C_1&+\frac1\mu\sum_{k=1}^d\|\omega_{n,k}\|_{W^{-1,p_k'}(\Omega)}\|u_{n,k}\|_{W_0^{1,p_k}(\Omega)}\ge f(u_n)-\frac{1}{\mu}\langle\omega_n,u_n\rangle\\
     &\quad=f(u_n)-\frac1\mu\sum_{k=1}^d\langle\omega_{n,k},u_{n,k}\rangle=\sum_{k=1}^d\frac{\mu-p_k}{p_k\mu}\int_\Omega A_k(x,u_{n,k})|\nabla u_{n,k}|^{p_k}\\
     & -\sum_{k=1}^d \frac{1}{p_k\mu}\int_\Omega D_sA_k(x,u_{n,k})|\nabla u_{n,k}|^{p_k}u_{n,k}+\int_\Omega\left[\frac{1}{\mu}\sum_{k=1}^dg_k(x,u_n)u_{n,k}-G(x,u_n)\right].
 \end{align*}
By \eqref{g.2}, the integrand in the last integral is nonnegative on $\{|u_n|\ge R\}$. On $\{|u_n|<R\}$, it is bounded from below by a constant depending only on $R$, $G$, the functions $g_k$, and $|\Omega|$. Therefore, there exists $C_2>0$ such that
\[
\int_\Omega\left[\frac{1}{\mu}\sum_{k=1}^dg_k(x,u_n)u_{n,k}-G(x,u_n)\right]\ge -C_2.
\]
Using this estimate and \eqref{eq global derivative estimate}, we get
 \begin{align*}
     C_1+\frac1\mu\sum_{k=1}^d\|\omega_{n,k}\|_{W^{-1,p_k'}(\Omega)}\|u_{n,k}\|_{W_0^{1,p_k}(\Omega)}
     &\ge\sum_{k=1}^d\frac{\mu-p_k-\gamma}{p_k\mu}\int_\Omega A_k(x,u_{n,k})|\nabla u_{n,k}|^{p_k}-C_2\\
     &\ge\nu\sum_{k=1}^d\frac{\mu-p_k-\gamma}{p_k\mu}\|u_{n,k}\|_{W_0^{1,p_k}}^{p_k}-C_2.
 \end{align*}
Since $\gamma<\mu-\overline p$, there exists $c_0>0$ such that
\[
\frac{\mu-p_k-\gamma}{p_k\mu}\ge c_0>0\qquad\text{for every }k=1,\dots,d.
\]
Moreover, $\|\omega_{n,k}\|_{W^{-1,p_k'}(\Omega)}\to0$ for every $k=1,\dots,d$. Thus,
\[
c_0\nu\sum_{k=1}^d\|u_{n,k}\|_{W_0^{1,p_k}}^{p_k}
\le C+o(1)\sum_{k=1}^d\|u_{n,k}\|_{W_0^{1,p_k}}.
\]
Since $p_k>1$ for every $k=1,\dots,d$, this implies that $\{u_n\}$ is bounded in $W_0^{1,\boldsymbol p}(\Omega)$, and the proof is complete.
\end{proof}

\section{Main results}
\begin{proof}[Proof of Theorem \ref{main result1}]
We now proceed to verify the assumptions of Theorem \ref{MPequi}.\\
    \textbf{Step 1}: There exist a finite-codimensional subspace $\mathcal W$ and constants $\rho,\alpha>0$ such that $f(u)\ge\alpha$ for every $u\in \partial B_\rho\cap\mathcal W$.
    
    According to \eqref{g.1}, and by the mean value theorem there exists $\theta\in(0,1)$ such that
    \begin{align*}
    |G(x,s)|\le|\nabla G(x,\theta s)\cdot s|&\le C\sum_{k=1}^d\left(1+\theta^{q_k-1}|s_k|^{q_k-1}+\sum_{j\ne k}\theta^{q_j\frac{q_k-1}{q_k}}|s_j|^{q_j\frac{q_k-1}{q_k}}\right)|s_k|\\
    &\le C\sum_{k=1}^d\left(|s_k|+|s_k|^{q_k}+\sum_{j\ne k}|s_j|^{q_j\frac{q_k-1}{q_k}}|s_k|\right).
    \end{align*}
    We set $\sigma_{j,k}=q_j\frac{q_k-1}{q_k}$. Using Young's inequality, we obtain that 
    \begin{align*}
   |s_j|^{q_j\frac{q_k-1}{q_k}}|s_k|\le \kappa_1|s_j|^{q_j}+\kappa_2|s_k|^{q_k},\qquad |s_k|\le \kappa_3+\kappa_4|s_k|^{q_k},
    \end{align*}
 with $\kappa_1,\kappa_2,\kappa_3,\kappa_4>0$,   and 
    \[
    |G(x,s)|\le\kappa\sum_{k=1}^d\left(1+|s_k|^{q_k}\right),
    \]
    for some $\kappa>0$.
    
       Therefore,
    \begin{align*}
        f(u)&\ge\sum_{k=1}^d\frac{\nu}{p_k}\int_\Omega |\nabla u_k|^{p_k}-\sum_{k=1}^d\kappa\int_{\Omega}\left(1+|u_k|^{q_k}\right)\\
        &\ge\sum_{k=1}^d\frac{\nu}{p_k}\int_\Omega |\nabla u_k|^{p_k}-\kappa|\Omega|-\kappa\sum_{k=1}^d\int_{\Omega}|u_k|^{q_k}.%\\
%        &\ge\sum_{k=1}^d\frac{\nu}{p_k}\int_\Omega |\nabla u_k|^{p_k}-\kappa|\Omega|-\kappa\sum_{k=1}^d\|u_k\|_{W_0^{1,p_k}}.
    \end{align*}
%    for some positive constant $C>0$. Notice that
%    \[
%   \frac{1}{q_j-1}<\frac{p_j^*}{q_j-1},
%    \]
%   for every $j,k=1,\dots,d$,  by the fact that $p_j^*>p_j>1$, and also
%  
%   \[
%   \left\{
%   \begin{aligned}
%   \frac{p_k^*}{p_k^*-1}<\frac{p_j^*}{q_j-1}\quad \text{if } p_j\le p_k\\
%   \frac{p_j^*}{p_j^*-1}<\frac{p_k^*}{q_k-1}\quad\text{if } p_j\ge p_k.
%   \end{aligned}
%   \right.
%   \]
%   for every $j,k=1,\dots,d$. Therefore, one can choose $s_{j,k}\ge1$ and the conjugate exponent $s_{j,k}'=\frac{s_{j,k}}{s_{j,k}-1}$ such that
%      \[
%   \left\{
%   \begin{aligned}
%  \max\left\{\frac{1}{q_j-1},\frac{p_k^*}{p_k^*-1}\right\} <s_{j,k}<\frac{p_j^*}{q_j-1}\quad \text{if } p_j\le p_k\\
% \max\left\{\textcolor{red}{\frac{1}{q_j-1}},\frac{p_j^*}{p_j^*-1}\right\} <s_{j,k}<\frac{p_k^*}{q_k-1}\quad\text{if } p_j>p_k.
%   \end{aligned}
%   \right.
%   \]
%According to Young's inequality with exponents 
%\[\max\left\{\textcolor{red}{\frac{p_j}{p_j^*-1}},\frac{1}{q_j-1},\frac{p_k^*}{p_k^*-1}\right\}<s_{j,k}<\frac{p_j^*}{q_j-1},\qquad s_{j,k}'=\frac{s_{j,k}}{s_{j,k}-1},\] we deduce that
%\begin{align*}
%f(u)\ge\sum_{k=1}^d\frac{\nu}{p_k}\int_{\Omega}|\nabla u_k|^{p_k}-C_1 \sum_{k=1}^d\int_\Omega|u_k|-\sum_{j,k=1}^dC_2\int_\Omega |u_j|^{(q_j-1)s_{j,k}}-\sum_{j,k=1}^dC_3\int_\Omega |u_k|^{s_{j,k}'},
%\end{align*}
%where $C_1=C$, $C_2=\frac{C}{s_{j,k}}$ and $C_3=\frac{C}{s_{j,k}'}$.

\noindent
Moreover, for every $k=1,\dots,d$, by H\"older's inequality, we also obtain
\begin{align*}
    \int_\Omega|u_k|^{q_k}\le\left(\int_\Omega|u_k|^{p_k}\right)^{\frac{\tau_k}{p_k}}\cdot\left(\int_\Omega|u_k|^{p_k^*}\right)^{\frac{q_k-\tau_k}{p_k^*}},\quad q_k<p_k^*,\ \tau_k=\frac{1-\frac{q_k}{p_k^*}}{\frac{1}{p_k}-\frac{1}{p_k^*}}.
\end{align*}
If $N\le p_k$, it is enough to replace $p_k^*$ in the previous interpolation by any exponent $r_k>q_k$ sufficiently large, using the embedding $W_0^{1,p_k}(\Omega)\hookrightarrow L^{r_k}(\Omega)$.
Thus,
\begin{align}\label{5.1}
 f(u)&\ge\sum_{k=1}^d\frac{\nu}{p_k}\int_\Omega|\nabla u_k|^{p_k}-\kappa|\Omega|-\kappa\sum_{k=1}^d\left(\int_\Omega|u_k|^{p_k}\right)^{\frac{\tau_k}{p_k}}\cdot\left(\int_\Omega|u_k|^{p_k^*}\right)^{\frac{q_k-\tau_k}{p_k^*}}.
\end{align}
According to \cite[Section 5]{candela2009infinitely} and \cite[Proposition 5.4]{candela2009some}, for every $k=1,\dots,d$ there are two sequences
$\{\lambda_{h,p_k}\}_{h\in\mathbb N}\subset(0,+\infty)$ and
$\{\varphi_{h,p_k}\}_{h\in\mathbb N}\subset W_0^{1,p_k}(\Omega)$ such that
\[
\begin{cases}
-\Delta_{p_k}\varphi_{h,p_k}=\lambda_{h,p_k}|\varphi_{h,p_k}|^{p_k-2}\varphi_{h,p_k}&\text{in $\Omega$},\\
\varphi_{h,p_k}=0&\text{on $\partial\Omega$},
\end{cases}
\]
and, for every $h_0\in\mathbb N$, a closed subspace $\mathcal W_{h_0,p_k}\subset W_0^{1,p_k}(\Omega)$ with finite codimension such that
\begin{equation}\label{dis var}
\lambda_{h_0,p_k}\int_\Omega |u_k|^{p_k}
\le
\int_\Omega |\nabla u_k|^{p_k},
\qquad
\text{for every }u_k\in\mathcal W_{h_0,p_k}.
\end{equation}
Moreover, $\lambda_{h,p_k}\to+\infty$ as $h\to+\infty$ for every $k=1,\dots,d$. We set
\[
\mathcal W_{h_0}:=
\mathcal W_{h_0,p_1}\times\dots\times \mathcal W_{h_0,p_d},
\]
which is a closed finite-codimensional subspace of $W_0^{1,\boldsymbol p}(\Omega)$.

Hence, for every $u \in \mathcal W_{h_0}$, \eqref{5.1}-\eqref{dis var} and the Sobolev embedding yield
    \begin{align*}
    f(u)&\ge\sum_{k=1}^d\frac{\nu}{p_k}\int_\Omega|\nabla u_k|^{p_k}-\kappa\sum_{k=1}^d\left(\int_\Omega|u_k|^{p_k}\right)^{\frac{\tau_k}{p_k}}\cdot\left(\int_\Omega|u_k|^{p_k^*}\right)^{\frac{q_k-\tau_k}{p_k^*}}-\kappa|\Omega|\\
    &\ge\frac{\nu}{\max\{p_1,\dots,p_d\}}\sum_{k=1}^d\|u_k\|_{W_0^{1,p_k}}^{p_k}-\kappa\sum_{k=1}^d\lambda_{h_0,p_k}^{-\frac{\tau_k}{p_k}}\|u_k\|_{W_0^{1,p_k}}^{q_k}-\kappa|\Omega|.
    \end{align*}
  Using the elementary inequality
\[
\left(\sum_{k=1}^d a_k\right)^{\gamma_0} \le C_d \sum_{k=1}^d a_k^{\gamma_0}, \qquad a_k \ge 0,\ \gamma_0 \ge 1,
\]
where one can take $C_d = d^{\gamma_0 - 1}$, we obtain in particular
\[
\left(\sum_{k=1}^d a_k\right)^{\tilde p} \le C_d \sum_{k=1}^d a_k^{\tilde p},\qquad \tilde p:=\min\{p_1,\dots,p_d\}.
\]
Moreover, by Young's inequality,
\[
a_k^{\tilde p} \le \frac{\tilde p}{p_k}a_k^{p_k} + \frac{p_k - \tilde p}{p_k} \le a_k^{p_k} + 1,
\]
for every $k=1,\dots,d$. Therefore,
\[
\left(\sum_{k=1}^d a_k\right)^{\tilde p} \le C_d \sum_{k=1}^d a_k^{p_k} + C_d d.
\]
This implies that there exist positive constants $c_1, c_2 > 0$ (depending only on $d$ and $\tilde p$) such that
\[
\sum_{k=1}^d a_k^{p_k} \ge c_1 \left(\sum_{k=1}^d a_k\right)^{\tilde p} - c_2.
\]
Hence, setting $a_k = \|u_k\|_{W_0^{1,p_k}}$, and $\overline p:=\max\{p_1,\dots,p_d\}$,  we conclude that
  \begin{align*}
    f(u)&\ge \frac{\nu}{\overline p} c_1\left(\sum_{k=1}^d \|u_k\|_{W_0^{1,p_k}}\right)^{\tilde p}
 - \left(\kappa |\Omega| + \frac{\nu}{\overline p} c_2\right)
 -\kappa\sum_{k=1}^d\lambda_{h_0,p_k}^{-\frac{\tau_k}{p_k}}\|u_k\|_{W_0^{1,p_k}}^{q_k}.
    \end{align*}
Then $f(u)\ge\alpha>0$ on $\mathcal W_{h_0}\cap\partial B_\rho$ for a suitable choice of $\rho,\alpha>0$ and $h_0\in\mathbb N$ large enough. Indeed, for every $u\in\partial B_\rho$, there exists $\tilde \kappa>0$ such that
  \[
  f(u)\ge\frac{\nu}{\overline p}c_1\rho^{\tilde p}-\tilde\kappa-\kappa\sum_{k=1}^d\lambda_{h_0,p_k}^{-\frac{\tau_k}{p_k}}\rho^{q_k}\ge1-\kappa\sum_{k=1}^d\lambda_{h_0,p_k}^{-\frac{\tau_k}{p_k}}\rho^{q_k},
  \]
  for a suitable choice of $\rho>0$. Since $\lambda_{h_0,p_k}\to+\infty$ as $h_0\to+\infty$ for every $k$, choosing $h_0$ large enough gives $f(u)\ge\alpha$ for some $\alpha>0$.\\
    \textbf{Step 2:} For every finite-dimensional subspace $V\subset W_0^{1,\boldsymbol{p}}(\Omega)$ there exists $R>\rho$ such that $f\le 0$ in $B_R^c\cap V$.
    
    Let $\varphi\in V$ with $\|\varphi\|=1$. By \eqref{g.2} we have that 
    \[G(x,s)\ge b_0(x)|s|^{\mu}-a_0(x),\ \ \text{with $b_0\in L^1(\Omega)$, $b_0>0$ a.e., and $a_0\in L^1(\Omega)$},\]
  see, for instance, \cite[Proof of Theorem 3.1, page 20]{squassina2009existence}.
   Therefore,
    \begin{align}\label{geometria 2}
        f(t\varphi)\le \sum_{k=1}^d\frac{Ct^{p_k}}{p_k}-t^{\mu}\int_\Omega b_0(x)|\varphi|^{\mu}+\|a_0\|_{L^1}\to-\infty,
    \end{align}
    as $t\to+\infty$. Therefore, the claim follows by taking $R>0$ large enough.\\
\textbf{Step 3}: The functional $f$ satisfies the $(PS)$-condition.

Let $c\in\R$ and let $\{u_n\}\subset W_0^{1,\boldsymbol p}(\Omega)$ be a $(PS)$-sequence at level $c$. By Corollary \ref{PS e CPS}-$(ii)$, $\{u_n\}$ is also a $(CPS)$-sequence and Theorem \ref{CPS bounded} and Theorem \ref{CPS condition} imply that the $(CPS)$-condition holds. Corollary \ref{PS e CPS}-$(iii)$ allows us to conclude that the functional also satisfies the $(PS)$-condition.
    
    Now, by Theorem \ref{MPequi} we obtain a sequence of critical points $\{u_n\}\subset W_0^{1,\boldsymbol p}(\Omega)$ for $f$ such that $f(u_n)\to+\infty$, as $n\to+\infty$. By Theorem \ref{regularity weak solutions}, together with Remark \ref{remaining regularity cases} for the cases $p_k\ge N$, each $u_n$ belongs to $W_0^{1,\boldsymbol p}(\Omega)\cap L^\infty(\Omega;\R^d)$. This completes the proof.
\end{proof}

\begin{proof}[Proof of Theorem \ref{main result equal p}]
The proof follows the same argument as the proof of Theorem \ref{main result1}.

Steps 1 and 2 are obtained  taking into account that
$p_1=\dots=p_d=p$ and using hypothesis \eqref{a.4p} in place of
\eqref{a.3} and \eqref{a.4}.

Step 3 is obtained using Theorem \ref{CPS bounded equal p} instead of Theorem \ref{CPS bounded}.

%It remains to verify the compactness condition. Let $c\in\R$ and let
%$\{u_n\}\subset W_0^{1,p}(\Omega;\R^d)$ be a $(PS)$-sequence at level $c$.
%By Corollary \ref{PS e CPS}-$(ii)$, $\{u_n\}$ is also a $(CPS)$-sequence.
%Then Theorem \ref{CPS bounded equal p} and Theorem \ref{CPS condition}
%imply that the $(CPS)$-condition holds. Hence, by Corollary
%\ref{PS e CPS}-$(iii)$, the functional satisfies the $(PS)$-condition.

Therefore, Theorem \ref{MPequi} yields a sequence of critical points
$\{u_n\}\subset W_0^{1,p}(\Omega;\R^d)$ such that
$f(u_n)\to+\infty$ as $n\to+\infty$. Finally, by Theorem
\ref{regularity weak solutions}, together with Remark
\ref{remaining regularity cases} for the cases $p\ge N$, each $u_n$ belongs
to $W_0^{1,p}(\Omega;\R^d)\cap L^\infty(\Omega;\R^d)$. This completes the
proof.
\end{proof}
%\printbibliography

\end{document}